\documentclass[11pt, english]{article}
\usepackage{preamble}
\usepackage{subcaption}

\title{On the approximation of L\'evy driven Volterra processes and their integrals}

\author{Giulia Di Nunno\thanks{Department of Mathematics,
University of Oslo, P.O. Box 1053 Blindern, N-0316 Oslo, Email: giulian@math.uio.no.} \thanks{Department of Business and Management Science, NHH Norwegian School of Economics, Helleveien 30, N-5045 Bergen.}
 \and Andrea Fiacco\thanks{Department of Mathematics, University of Oslo, P.O. Box 1053 Blindern, N-0316 Oslo. Email andrefi@math.uio.no.}  \and
Erik Hove Karlsen\thanks{IF Skadeforsikring, Drammensveien 264, 0283 Oslo. Email: erik.h.karlsen@gmail.com}}
\date{December 21st, 2018}

\begin{document}
\maketitle
\begin{abstract}
Volterra processes appear in several applications ranging from turbulence to energy finance where they are used in the modelling of e.g. temperatures and wind and the related financial derivatives. Volterra processes are in general non-semimartingales and a theory of integration with respect to such processes is in fact not standard. In this work we suggest to construct an approximating sequence of L\'evy driven Volterra processes, by perturbation of the kernel function. In this way, one can obtain an approximating sequence of semimartingales.

Then we consider fractional integration with respect to Volterra processes as integrators and we study the corresponding approximations of the fractional integrals.
We illustrate the approach presenting the specific study of the Gamma-Volterra processes. Examples and illustrations via simulation are given.

  \vspace{2mm}\noindent
  {\bf Keywords:}
Riemann-Liouville fractional integral, Volterra processes, fractional Brownian motion, ambit processes, generalized Lebesgue-Stieltjes integral, non-semimartingales.
\end{abstract}

%%%%%%%%%%%%%%%%%%%%%%%%%%%%%%%%%%%%%%%%%%%%%%%%%%%%%%%%%%%%%%%%%%%%%%%%%%%%

\section{Introduction}
We consider Volterra type processes driven by L\'evy noise $L(t),\;t\geq0$, of the form:
\begin{equation}\label{eq:mainint}
 Y(t):=\int_0^tg(t-s)dL(s),\quad t\geq0,
\end{equation}
where $g$ is a deterministic kernel. Such processes appear in many different applications including models for tumour growth, turbulence, and energy finance, see e.g. \cite{BNS,veraart2,veraart3,S}. Processes of type \eqref{eq:mainint} belong to the family of ambit fields as presented e.g. in \cite{veraart} and include, as particular cases, the L\'evy fractional Brownian motion given by the Riemann-Liouville integral, see \cite{MVN}. The fractional Brownian motion is represented (modulo a constant factor) by an integral of type \eqref{eq:mainint} plus a suitable process with absolutely continuous trajectories, see \cite[p. 424]{MVN}. Compare also with the integral representation on $(0,t]$ with the Molchan-Golosov kernel, see e.g. \cite{Jost}. For fractional L\'evy processes we can refer e.g. to \cite{Marquardt2006, BM2008, bender} and references therein.

In general Volterra processes are not semimartingales, see \cite{basse}. We recall that semimartingales constitute the largest class of integrators for a stochastic integration theory (It\^o-type integration) which is well-suited for applications where the adaptedness or the predictability with respect to a given information flow plays an important role. 
This is the case, for example, in mathematical finance where one needs integration to define e.g. the central concept of the value process of a self financing portfolio. Also, the numerical methods are flourishing in the case of semimartingale models. Without means of being exhaustive, we can refer, e.g., to classical books \cite{Bouleau-Lepingle-book, KP} and to more recent works that show that the area is in simmering activity \cite{Platen-book2, song-wang, Wang, Yan }. 
Processes of type \eqref{eq:mainint} have interesting stylized features, like the non-trivial time correlation structure, that well suits several contexts of modelling, such as in renewable energies. In energy finance the use of non-semimartingale models is well motivated. See e.g. \cite[section 3.3]{veraart2} for a discussion.

In this paper we propose to approximate \eqref{eq:mainint} by the process
\begin{equation}\label{eq:mainintapprox}
 Y^\e(t):=\int_0^tg^\e(t-s)dL(s),
\end{equation}
where \( g^\e\), with $\e\in(0,1)$, is a family of deterministic kernel functions approximating $g$, i.e. $g^\e\longrightarrow g$ as $\e\rightarrow0$, in an appropriate sense. We are interested in the cases when $g^\e$ guarantees that $Y^\e(t),\;t\geq0$, is a semimartingale and we show that $Y^\e(t)$ approximates $Y(t)$ in the sense of $L_p$-convergence.

Approximations of this type were first introduced in \cite{thao} and \cite{thao2}, and then used in \cite{MR2782221}, but only in the case where $Y$ is a fractional Brownian motion. Our result extends substantially this first study and moves beyond. 

In fact, the core of the present paper deals with the generalized Lebesgue-Stieltjes integrals with respect to the processes \eqref{eq:mainint} and \eqref{eq:mainintapprox} as integrators. This is a form of pathwise integration defined via the fractional derivatives. For a survey, new results and conditions for integration with respect to Volterra type processes as integrators see \cite{dinunno}. In this study we suggest sufficient conditions to ensure that, for a given integrand $X$, the generalized Lebesgue-Stieltjes integrals with respect to $Y^\e$ and $Y$ as integrators converge in $L_1$:
\begin{equation}\label{eq:gLSapprox}
 \int_0^T X(s)dY^\e(s)\longrightarrow\int_0^T X(s)dY(s),\quad\e\rightarrow0.
\end{equation}

We remark that, if $Y^\e$ is a semimartingale and $X$ is a predictable process (with respect to the same filtration), the generalized Lebesgue-Stieltjes integral corresponds to the It\^o type integral. Hence, in the context of predictable integrands, the approximation \eqref{eq:gLSapprox} provides an approximation of a non-semimartingale by a semimartingale. We intend to exploit this feature in future research dealing with hedging in energy finance.
Here we illustrate the use of the approximation in simulation with an example.

We illustrate the results in full detail in the case of
\begin{equation}\label{eq:keyex}
 Y(t):=\int_0^t(t-s)^\beta e^{-\lambda(t-s)}dL(s),
\end{equation}
for $\beta\in(-1/2,1/2)$, $\lambda\geq0$. In this case \( g \) is, up to a constant, a Gamma kernel. For $\beta\in(-1/2,0)$, the integral \eqref{eq:keyex} is obtained as an appropriate stochastic modification of the Riemann-Liouville fractional integral in which the factor $e^{-\lambda(t-s)}$ in the kernel has a dampening effect. The processes \eqref{eq:keyex} appear explicitly in the modelling of turbulence and in the modelling of environmental risk factors in energy finance (e.g. wind), see \cite{BN,VK}. In the sequel we refer to \eqref{eq:keyex} as Gamma-Volterra process.
In view of the relevance of this family in applications, we shall detail the study of such processes.

The paper is organised as follows. The next section reviews knowledge about Volterra processes and introduces an approximation by perturbation of the kernel. Particularly interesting is the case when the Volterra process is not a semimartingale and it can be approximated by a semimartingale process. As illustration, the L\'evy driven Gamma-Volterra processes are studied along with their approximations.
Section 3 deals with fractional integration and it is divided in two parts. In the first part we revise general facts and then we provide conditions to guarantee when a Volterra process is an appropriate integrator. This includes cases when the Volterra process is not a semimartingale. Examples are provided.
In the second part of the section, exploiting the approximation introduced before, we suggest an approximation of the integral with respect to a Volterra process. Examples and full detailed conditions are provided in the case of a Gamma-Volterra process. Finally, a numerical example is given as direct application and illustration of the technique proposed.

%%%%%%%%%%%%%%%%%%%%%%%%%%%%%%%%%%%%%%%%%%%%%%%%%%%%%%%%%%%%%%%%%%%%%%%%%%%%

\section{Volterra processes and a semimartingale \\approximation}
First of all we review the fundamental concepts to ensure the meaningful definition of \( Y \) in \eqref{eq:mainint}. We define the integration of a deterministic function with respect to the L\'evy process \( L \) as in \cite{dinunno} by the approach proposed in \cite{urbanik} and further developed in \cite{rajput}.

Let \( (\Omega, \F,\prob) \) be a complete probability space and \( L=L(t),\;t\geq0, \) be a L\'evy process with characteristic function represented in the following form (see e.g. \cite{sato}):
\begin{equation*}
 \E\left[e^{ixL(t)}\right]=e^{t\psi(x)}, \quad x \in \mathbb{R},
\end{equation*}
with
\begin{equation*}
 \psi(x)=iax-\frac{x^2b}{2}+\int_\R\{e^{ixz}-1-ix\tau(z)\}\nu(dz),
\end{equation*}
where
\begin{equation*}
 \tau(z):=
 \begin{cases}
  z,\quad|z|\leq1\\
  \frac{z}{|z|},\quad|z|>1,
 \end{cases}
\end{equation*}
\( a\in\R,\;b\geq0\), and \( \nu \) is a L\'evy measure on \( \R \), i.e. it is a \( \sigma \)-finite Borel measure satisfying
\begin{equation*}
 \int_\R (z^2\wedge 1)\nu(dz)<\infty,\quad\nu(\{0\})=0.
\end{equation*}
The triplet \( (a,b,\nu) \) is called the characteristic triplet of the L\'evy process \( L \).

\vspace{2mm}

From the increments \( L((s,t]):=L(t)-L(s),\;s\leq t \) of the L\'evy process $L$, we obtain the random measure on \( \borel([0,\infty)) \) taking values in \( L_0(\Omega,\F,\prob) \), see \cite{rajput}. The random measure is still denoted by $L$.  
For any \( A\in\borel([0,\infty)) \) s.t. \( \lambda(A)<\infty \), the random measure values \( L(A) \) are random variables with infinitely divisible distribution and L\'evy-Khintchine characteristic function
\begin{equation*}
 \E\left[e^{ixL(A)}\right]=e^{\lambda(A)\psi(x)}, \quad x \in \mathbb{R}.
\end{equation*}
Here $\lambda$ denotes the Lebesgue measure on $\borel(\R)$.

\begin{definition}  \hspace{2cm}
\label{integrand}
 \begin{enumerate}[(i)]
  \item Let $f=\sum_{j=1}^J f_j\mathbf{1}_{A_j}$ be a real-valued simple function on $[0,T]$, where the pairwise disjoint sets $A_j\in\borel([0,T])$ belong to a partition of $[0,T]$. Then, for any $A\in\borel([0,T])$, we set
  \[
   \int_AfdL:=\sum_{j=1}^J f_jL(A\cap A_j).
  \]
 \item A measurable function $f:([0,T],\borel([0,T]))\longrightarrow(\R,\borel(\R))$ is said to be $L$-integrable (on $[0,T]$) if there exists a sequence $\{f_n\}_{n\geq 1}$ of simple functions as in (i) such that
 \begin{enumerate}
  \item $\lim_{n\to \infty } f_n = f $, $ \lambda$-$a.e.$
  \item for any $A\in\borel([0,T])$, the corresponding sequence $\{\int_A f_n dL\}_{n\geq 1}$ converges in probability as $n\rightarrow\infty$.
 \end{enumerate}
 \end{enumerate}
 If $f$ is $L$-integrable, the stochastic integral on $A\in \borel([0,T])$ is defined by
 \[
  \int_AfdL:=\lim_{n\rightarrow\infty}\int_Af_ndL,
 \]
with convergence in probability.
\end{definition}

\noindent 
The integral is well-defined, i.e. for any \( L \)-integrable function \\
\( f:([0,T],\borel([0,T])) \longrightarrow (\R,\borel(\R)) \), the integral does not depend on the choice of approximating sequence \( \{f_n\}_{n \geq 1} \).
Moreover, the integral $\int_A f dL$ is also infinitely divisible with explicit characteristic function, see \cite{rajput, urbanik}. 

\vspace{3mm}
\noindent 
The following result characterizes the space of integrands. See e.g. Lemma 2.1 in \cite{dinunno}.
\begin{lemma}\hspace{2cm}
\label{L-integrands}
\begin{enumerate}[(i)]
\item For \( p\geq2 \), any function \( f\in L_p([0,T]) \) is \( L \)-integrable.
\item For \( p\in[1,2) \) assume that \(L\) satisfies \( b=0 \) and \( \int_{|z|\leq1}|z|^p\nu(dz)<\infty \). 
Then any function \( f\in L_p([0,T]) \) is \( L \)-integrable.
\end{enumerate}
\end{lemma}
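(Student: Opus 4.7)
The plan is to invoke the Rajput--Rosinski characterization of $L$-integrability (see \cite{rajput}), which asserts that a measurable $f:[0,T]\to\R$ is $L$-integrable if and only if the three deterministic conditions
\begin{align*}
&\textup{(A)}\quad \int_0^T \Bigl| af(s)+\int_\R\bigl[\tau(zf(s))-f(s)\tau(z)\bigr]\nu(dz)\Bigr|\,ds<\infty,\\
&\textup{(B)}\quad \int_0^T b\,f(s)^2\,ds<\infty,\\
&\textup{(C)}\quad \int_0^T\!\!\int_\R \bigl(|zf(s)|^2\wedge 1\bigr)\nu(dz)\,ds<\infty
\end{align*}
are simultaneously satisfied. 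With this reduction in hand the whole statement becomes a deterministic estimation problem on $f$ in terms of the triplet $(a,b,\nu)$, and I would verify (A)--(C) separately in the two regimes.

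For part (i), since $[0,T]$ has finite Lebesgue measure, H\"older gives $L_p([0,T])\subset L_2([0,T])\subset L_1([0,T])$ for $p\geq 2$; in particular $f\in L_2$, which immediately yields (B). For (C) the key estimate is the pointwise bound $|zf(s)|^2\wedge 1\le (1+|f(s)|^2)\,(|z|^2\wedge 1)$, obtained by distinguishing $|f(s)|\le 1$ from $|f(s)|>1$ and, in the latter case, further splitting the $z$-integral at $|z|=1/|f(s)|$; integrating against $\nu$ and using $\int_\R(|z|^2\wedge 1)\,\nu(dz)<\infty$ reduces (C) to $\int_0^T(1+f(s)^2)\,ds<\infty$. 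For (A) I would decompose the inner integrand according to whether $|z|$ and $|zf(s)|$ lie above or below $1$; on the regions where $\tau$ acts linearly the expression cancels, and on the remaining regions one obtains a bound of the form $C\,(|f(s)|+|f(s)|^2)$ after integration in $z$, so that $f\in L_1\cap L_2$ closes the estimate.

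For part (ii), the assumption $b=0$ makes (B) trivial. The crucial observation is the elementary pointwise inequality
\[
|zf(s)|^2\wedge 1 \;\le\; |z|^p\,|f(s)|^p, \qquad p\in[1,2],\ |z|\le 1,
\]
valid because $|y|^2\le|y|^p$ when $|y|\le 1$ and $1\le|y|^p$ when $|y|\ge 1$ (applied to $y=zf(s)$). Combined with the trivial bound $|zf(s)|^2\wedge 1\le 1$ on $\{|z|>1\}$ together with $\nu(\{|z|>1\})<\infty$, this turns (C) into
\[
\int_0^T|f(s)|^p\,ds\cdot\!\int_{|z|\le 1}|z|^p\,\nu(dz)\;+\;T\cdot\nu(\{|z|>1\})<\infty,
\]
which is finite by the hypotheses. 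For (A) I would again split the $z$-integral and use the same $|z|^p|f(s)|^p$-type bound on the small-jump part (exploiting $\int_{|z|\le 1}|z|^p\nu(dz)<\infty$), and the boundedness of $\tau$ together with $\nu(\{|z|>1\})<\infty$ on the large-jump part, producing an integrand of order $|f(s)|+|f(s)|^p$, which is integrable on $[0,T]$ since $L_p\subset L_1$ there.

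I expect the principal technical obstacle to be condition (A): unlike (B) and (C), the drift-type term mixes the deterministic constant $a$, the truncation function $\tau$, and the L\'evy measure in a way that does not split cleanly, and the cancellation $\tau(zf(s))-f(s)\tau(z)=0$ on $\{|zf(s)|\le 1,\ |z|\le 1\}$ has to be exploited carefully to avoid spurious logarithmic or non-integrable singularities at the origin. Once the region decomposition above is carried out and the correct $|z|^p$-bounds are applied, the remainder of the argument is a routine application of Fubini and the integrability of $f$ in the appropriate $L_p$ space.
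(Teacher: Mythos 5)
Your proposal is correct: the paper itself gives no proof of this lemma, deferring to Lemma~2.1 of the cited reference (Di Nunno, Mishura and Ralchenko), and that proof proceeds exactly as you propose, by verifying the three Rajput--Rosi\'nski integrability conditions, with the bound $|zf(s)|^2\wedge 1\le |zf(s)|^p$ for $p\in[1,2]$ and the cancellation of $\tau(zf(s))-f(s)\tau(z)$ on $\{|z|\le 1,\ |zf(s)|\le 1\}$ as the key points. Your region decompositions for conditions (A) and (C) close as claimed (up to a harmless additive constant from the large-jump region, which is integrable on the finite interval $[0,T]$), so no gap remains.
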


\noindent
Hence for all $t$, under the conditions of Lemma \ref{L-integrands}, we have that the integral \eqref{eq:mainint} is well defined for $L$-integrable functions $g(t-\cdot)$  on $[0,t]$.
The proper definition of $Y$ is a standing assumption in this work.

\vspace{3mm}
Depending on the properties of the kernel function \( g \), the Volterra process may or may not be a semimartingale. The semimartingale property of various subclasses of Volterra type processes is studied in e.g. \cite{basse, rosinski, bender, knight}. Hereafter, we fix the natural filtration \( \mathbb{F}=\{ \F_t,\;t \geq 0\} \) generated by the L\'evy process \( L \) with the characteristic triplet \( (a,b,\nu) \) on \( (\Omega,\F,\prob) \) and we state the necessary and sufficient conditions to guarantee that the Volterra process \( Y \)  in \eqref{eq:mainint} is a semimartingale. See \cite{basse}, Theorem 3.1 and Corollary 3.5.

\begin{theorem}\label{thm:basse}
Assume that $L(t),\;t\geq0,$ is of unbounded variation. Then $Y$ is an $\mathbb{F}$-semimartingale if and only if $g=g(u)$, $u\geq0$, is absolutely continuous on $\R_+$ with a density $g'$ such that
$$
\int_0^t \vert g'(u)\vert^2 du < \infty, \quad t \geq 0,
$$
when $b>0$, and satisfies
 \begin{equation}\label{eq:semicond}
  \int_0^t\int_{[-1,1]}|zg'(u)|^2\wedge|zg'(u)|\nu(dz)du<\infty,\quad t>0,
 \end{equation}
 when $b=0$.
 
Assume that $L(t),\;t\geq0,$ is of bounded variation. Then $Y(t),\;t\geq0,$ is an $\mathbb{F}$-semimartingale if and only if it is of bounded variation, which is equivalent to requesting that $g$ is of bounded variation.
\end{theorem}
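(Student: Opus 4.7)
The plan is to prove both directions separately, and within the unbounded-variation case to treat the Gaussian ($b>0$) and purely non-Gaussian ($b=0$) sub-cases. For sufficiency in the unbounded-variation case, I would start from the absolute continuity of $g$ and write $g(t-s) = g(0) + \int_s^t g'(v-s)\,dv$. Substituting into \eqref{eq:mainint} and applying a stochastic Fubini theorem yields the decomposition
\begin{equation*}
Y(t) \;=\; g(0)\,L(t) \;+\; \int_0^t Z(v)\,dv, \qquad Z(v):=\int_0^v g'(v-s)\,dL(s).
\end{equation*}
The integrability hypothesis on $g'$ (the $L_2$ condition when $b>0$, or \eqref{eq:semicond} when $b=0$) ensures, via Lemma \ref{L-integrands} applied to each shifted kernel $g'(v-\cdot)$ and a truncation argument, that $Z(v)$ is well defined and jointly measurable, which is exactly what the stochastic Fubini needs. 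The first term is a Lévy process and the second is absolutely continuous in $t$, so $Y$ is a semimartingale.

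For necessity in the unbounded-variation case, I would argue by contradiction using the canonical decomposition $Y=Y_0+M+A$ of an $\mathbb{F}$-semimartingale together with the $L$-driven structure. The continuous martingale part $Y^c$ satisfies $\langle Y^c\rangle(t)=b\int_0^t g(t-s)^2\,ds$ when $b>0$; forcing $\langle Y^c\rangle$ to be absolutely continuous in $t$ and differentiable in the required sense propagates back to absolute continuity of $g$ and $L_2$-integrability of $g'$. In the purely jump case $b=0$, the analogous information must be extracted from the predictable compensator of the jump measure of $Y$, which in turn is determined by the kernel and $\nu$: condition \eqref{eq:semicond} is precisely what is needed so that the compensator admits a density with respect to Lebesgue measure. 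This step is where I would invoke the representation-of-semimartingale-characteristics machinery (à la Jacod–Shiryaev), translating regularity of the triplet of $Y$ into regularity of $g$.

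For the bounded-variation case the argument is more elementary. If $g$ is of bounded variation, then a pathwise integration-by-parts writes $Y(t)$ as a sum of $g(0)L(t)$ and a Stieltjes-type integral of $L$ against $dg$, producing a process of bounded variation and hence automatically an $\mathbb{F}$-semimartingale. Conversely, if $Y$ is a semimartingale driven by a bounded-variation Lévy $L$, any nontrivial continuous-local-martingale or oscillatory part would have to come from $g$, so the semimartingale property forces $Y$ itself to be of bounded variation, which by reversing the integration-by-parts identifies $g$ as of bounded variation.

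The step I expect to be the main obstacle is the necessity direction when $b=0$: with no Gaussian piece to pin down regularity of $g$ via quadratic variation, one has to read the condition \eqref{eq:semicond} off the Lévy measure of $Y$ using only its jump structure, which requires careful control of $\nu$-integrals of $|zg'(u)|^2\wedge|zg'(u)|$ and a delicate disintegration argument.
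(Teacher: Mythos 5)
First, a point of comparison: the paper does not prove this theorem at all --- it is quoted from Basse and Pedersen \cite{basse} (Theorem 3.1 and Corollary 3.5), so there is no internal proof to measure your attempt against. Judged on its own merits, your sufficiency argument in the unbounded-variation case is the right one and matches the known proof: writing $g(t-s)=g(0)+\int_s^t g'(v-s)\,dv$ and applying a stochastic Fubini theorem gives $Y(t)=g(0)L(t)+\int_0^t Z(v)\,dv$ with $Z(v)=\int_0^v g'(v-s)\,dL(s)$, and the stated integrability of $g'$ is precisely what makes $Z$ well defined and locally integrable, so $Y$ is a L\'evy process plus an absolutely continuous drift.

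The necessity direction, however, contains a genuine gap, and both mechanisms you propose would fail. When $b>0$, the identity $\langle Y^c\rangle(t)=b\int_0^t g(t-s)^2\,ds$ is false: that expression is the \emph{variance} of $Y(t)$, not its quadratic variation. Under the decomposition above one has $Y^c=g(0)L^c$, hence $\langle Y^c\rangle_t=g(0)^2\,b\,t$, which differs from $b\int_0^t g(u)^2\,du$ unless $g$ is constant; moreover, the map $t\mapsto b\int_0^t g(u)^2\,du$ is absolutely continuous as soon as $g\in L_2^{loc}$, so no regularity of $g$ itself can be ``propagated back'' from it. Likewise, when $b=0$ the jumps of a semimartingale moving average satisfy $\Delta Y(t)=g(0^+)\Delta L(t)$, so the compensator of the jump measure of $Y$ sees only the single number $g(0^+)$ and cannot encode condition \eqref{eq:semicond}, which involves $g'$ on all of $(0,t)$. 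The actual proof of necessity (Knight's theorem in the Gaussian case, extended to the L\'evy case in \cite{basse}) runs quite differently: one exploits the fact that for a semimartingale the total variations of the predictable drift, built from conditional increments of the form $\E[Y(t_{i+1})-Y(t_i)\mid\F_{t_i}]$, which equal $\int_0^{t_i}\bigl(g(t_{i+1}-s)-g(t_i-s)\bigr)dL(s)$ plus a deterministic term, are bounded uniformly over partitions; a uniform-boundedness argument combined with the structure of infinitely divisible laws then forces $g$ to be absolutely continuous with $g'$ satisfying the stated integrability. Your bounded-variation paragraph is acceptable in outline, but the hard half of the theorem is the necessity you have only gestured at.
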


\vspace{2mm}
\begin{example}\label{coro:semi}
{\bf Semimartingale property of Gamma-Volterra processes.}
Consider the Gamma-Volterra process \eqref{eq:keyex}:
\begin{equation*}
 Y(t):=\int_0^t(t-s)^\beta e^{-\lambda(t-s)}dL(s), \quad t \geq0,
\end{equation*}
with $\beta \ne 0, \lambda \geq0$.
From direct application of the theorem above we see that if $L$ is a Brownian motion or a L\'evy process with $b>0$, then $Y$ is a $\mathbb{F}$-semimartingale if and only if $\beta>1/2$.
If $L$ is a L\'evy process with no Brownian component, i.e. \( b=0 \), then $Y$ is well-defined and an $\mathbb{F}$-semimartingale if and only if one of the following conditions is satisfied (see \cite{basse}, Corollary 3.5):
 \begin{enumerate}[(i)]
  \item $\beta>1/2$,
  \item $\beta=1/2$ and $\int_{[-1,1]}z^2|\log|z||\nu(dz)<\infty$,
  \item $\beta\in(0,1/2)$ and $\int_{[-1,1]}z^{1/(1-\beta)}\nu(dz)<\infty$.
 \end{enumerate}
\end{example}

\vspace{3mm}
The following result is a moment estimate for the L\'evy driven Volterra processes, see Theorem 2.2 and Remark 2.2 in \cite{dinunno}. This is obtained under the technical assumption that $\nu$ is symmetric. We shall make this assumption in our present work.
\begin{theorem}\label{thm:dinunno}
Let $L=L(t),\;t\geq0$, be a L\'evy process with symmetric L\'evy measure $\nu$. We have the following two statements:
 \begin{enumerate}[(a)]
  \item  For a L\'evy process with characteristic triplet $(a,0,\nu)$ such that $\int_\R|z|^p\nu(dz)<\infty$ for some $p\geq1$, we assume that for \( t \geq0 \), \( g(t,\cdot)\in L_p([0,t]) \). Then $g(t,\cdot)$ is $L-$integrable and we have the estimate:
  \begin{equation*}\label{eq:est1}
   \E\left|\int_0^tg(t,s)dL(s) \right|^p\leq C_1\left(|a|^p\|g(t,\cdot)\|^p_{L_1[0,t]}+\|g(t,\cdot)\|^p_{L_p[0,t]}\int_\R|z|^p\nu(dz)\right).
  \end{equation*}
  \item For a L\'evy process with characteristic triplet $(a,b,\nu)$ such that $\int_\R|z|^p\nu(dz)<\infty$ for some $p\geq2$, we assume that for \( t\geq 0 \), \( g(t,\cdot)\in L_p([0,t]) \). Then $g(t,\cdot)$ is $L-$integrable and we have the estimate:
  \begin{align*}\label{eq:est2}
   \E\left|\int_0^tg(t,s)dL(s) \right|^p\leq C_2\bigg(&|a|^p\|g(t,\cdot)\|^p_{L_1[0,t]}+b^{p/2}\|g(t,\cdot)\|^p_{L_2[0,t]}\\
   &+\|g(t,\cdot)\|^p_{L_p[0,t]}\int_\R|z|^p\nu(dz)\bigg).
  \end{align*}
 \end{enumerate}
 The constants $C_1,C_2$ do not depend on $g$.
\end{theorem}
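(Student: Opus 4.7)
The plan is to use the L\'evy--It\^o decomposition
\[
L(t) = at + \sqrt{b}\, W(t) + J(t),
\]
with $W$ a standard Brownian motion independent of the symmetric pure-jump L\'evy process $J$ of L\'evy measure $\nu$, so that the integral splits as the sum of three independent stochastic integrals. The elementary inequality $|x+y+z|^p \leq 3^{p-1}(|x|^p + |y|^p + |z|^p)$ then reduces the task to estimating each of the three contributions separately.

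The drift contribution is controlled directly by H\"older's inequality:
\[
\bigl| a\, \int_0^t g(t,s)\, ds \bigr|^p \leq |a|^p\, \|g(t,\cdot)\|_{L_1[0,t]}^p,
\]
yielding the first summand in both (a) and (b). The Brownian contribution, relevant only in case (b) where $p \geq 2$, is centred Gaussian with variance $\|g(t,\cdot)\|_{L_2[0,t]}^2$, so its $p$-th moment equals a universal constant times $\|g(t,\cdot)\|_{L_2[0,t]}^p$, and multiplying by $b^{p/2}$ produces the required term.

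The jump contribution is the core of the argument. Approximating $g(t,\cdot)$ by simple functions $g_n$ as in Definition~\ref{integrand}, the integral against $J$ is the limit of finite sums $\sum_j g_n(s_j)\, J(A_j)$ of independent symmetric infinitely divisible random variables. In case (b) with $p \geq 2$, I would invoke the Kunita/BDG inequality for Poisson stochastic integrals, producing both an $L_2$-type and an $L_p$-type bound; the small-jump $L_2$ contribution is then absorbed into the dominating term $\|g(t,\cdot)\|_{L_p[0,t]}^p \int |z|^p \nu(dz)$ by means of the standing moment assumption. In case (a) with $1 \leq p < 2$ and $b=0$, the It\^o $L_2$ isometry is no longer available, and I would instead apply a Marcinkiewicz--Zygmund/Rosenthal-type inequality for sums of independent symmetric random variables in the range $p\in[1,2]$, of the form $\E|\sum_j f_j\xi_j|^p \leq C \sum_j |f_j|^p\, \E|\xi_j|^p$, and pass to the limit to obtain $C\, \|g(t,\cdot)\|_{L_p[0,t]}^p \int |z|^p\, \nu(dz)$.

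The main obstacle is precisely the case $p\in [1,2)$ in (a): without the martingale $L_2$ isometry one must exploit the symmetry of $\nu$ directly, either via a Rademacher symmetrisation (conditioning on the jump heights, attaching independent random signs, and invoking Khintchine--Kahane) or via a LePage-style series representation of $J$. Once the symmetric Rosenthal-type inequality is in place, the passage from simple to general integrands is routine by the $L_p$-convergence of $g_n$, and the constants $C_1,C_2$ depend only on $p$ and on the abstract inequalities used, not on $g$ itself.
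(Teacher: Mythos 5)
The paper does not actually prove this statement: it is imported verbatim, with proof, from Theorem 2.2 and Remark 2.2 of \cite{dinunno}, so there is no internal argument to compare yours against. That said, your sketch is a sound reconstruction of the standard proof of the cited result: the L\'evy--It\^o splitting into drift, Gaussian and symmetric jump parts, H\"older for the drift, Gaussian moments for the Brownian part, and a Khintchine/Rosenthal-type bound for the Poisson part (the conditional-Rademacher route you mention is the cleanest for $p\in[1,2]$, since conditioning on the unsigned jump configuration and applying Khintchine gives $\E\bigl[\bigl|\sum_i f(s_i)\epsilon_i |z_i|\bigr|^p\bigr]\leq B_p\,\E\sum_i|f(s_i)|^p|z_i|^p = B_p\|f\|_{L_p}^p\int_\R|z|^p\nu(dz)$ directly by the Campbell formula, for general $f$, not just simple ones). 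Two caveats. First, you implicitly identify case (a) with $p<2$, but (a) allows any $p\geq1$ with $b=0$; for $p>2$ you must run your case-(b) jump argument there too. Second, the ``absorption'' of the Rosenthal term $\|g\|_{L_2}^p\bigl(\int_\R z^2\nu(dz)\bigr)^{p/2}$ into $\|g\|_{L_p}^p\int_\R|z|^p\nu(dz)$ is \emph{not} a consequence of a pointwise comparison of the two $\nu$-integrals (it fails, e.g., for $\nu_n=n\delta_{1/\sqrt n}+n\delta_{-1/\sqrt n}$ with $p>2$); it goes through H\"older in time, $\|g\|_{L_2[0,t]}\leq t^{1/2-1/p}\|g\|_{L_p[0,t]}$, and forces the constants $C_1,C_2$ to depend on $t$ and on $\nu$. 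That is still consistent with the theorem's only claim (independence of $g$) and with how the estimate is used later in the paper (fixed $t$, fixed triplet), but you should state this dependence explicitly rather than attribute the absorption to ``the standing moment assumption'' alone.
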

Notice that, in the present work, all constants in the estimates are denoted by $C$. Their dependence on the parameters can be explicitly given when relevant. Their specific form is deduced from the context.

\vspace{3mm}
\noindent\textbf{Remark.} Recall that a L\'evy process with characteristic triplet $(0,b,\nu)$ is a square-integrable martingale if and only if, for some $p\geq2$,
\begin{equation*}
\int_{|z| \geq 1} |z|^p \; \nu(dz)< \infty \quad \textit{and} \quad \int_{|z| \geq 1} z \; \nu(dz)=0.
\end{equation*}
Then, considering the assumptions of Theorem \ref{thm:dinunno} in this case, if the L\'evy process $L$ has symmetric L\'evy measure $\nu$ and $\int_{\mathbb{R}} |z|^p \; \nu(dz)< \infty$ for some $p\geq2$, this L\'evy process is a square-integrable martingale with $\langle L \rangle_t =t\,  (b + \int_{\mathbb{R}} z^2 \; \nu(dz))$, $t\geq0$. In this case we could also consider It\^o stochastic integration of predictable stochastic processes $g(t-\cdot)$ and find estimates of the moments based on the Burkholder-Davis-Gundy and Bichteler-Jacod types inequalities. See e.g. Lemma 5.1 in \cite{Jacod-et-al}.  However, we shall not consider such processes in the framework of the present work. We remark that other similar estimates can be found by means of Rosenthal inequalities in the case of Poisson stochastic integrals, which are optimal in the sense that an It\^o isomorphism is obtained, see \cite{dirksen}.

\vspace{3mm}
For later use, we consider the following result.
\begin{lemma}\label{lemma:leftlimit}
Let $L$ be a L\'evy process with characteristic triplet $(a,b,\nu)$, where $\nu$ is symmetric and $\int_{\mathbb{R}} |z|^p \; \nu(dz)< \infty$ for some $p\geq1$, if $b=0$, or some $p\geq2$, if $b>0$. Then $g(t-\delta-\cdot)\mathbf{1}_{(0,t-\delta)}(\cdot)\in L_p([0,t])$, for all $\delta \in (0,t)$, and the integrals
$$
Y(t-\delta) = \int_0^{t-\delta} g(t-\delta-s)dL(s), \quad \delta \in (0,t),
$$
are well defined.
\noindent
Assume that $\lim_{\delta \downarrow 0} g(t-\delta-\cdot)$ exists with convergence in $L_p([0,t])$ and denote by $g(t^--\cdot)$ the limit and the corresponding function defined $s$-a.e. by a subsequence. Then the integral 
$$
Y(t^-) := \int_0^t g(t^- -s)dL(s)
$$
is well-defined. Furthermore, $\lim_{\delta \downarrow 0} Y(t-\delta)$ exists with convergence in $L_p(\Omega)$ (and in probability) and $Y(t^-) = \lim_{\delta \downarrow 0} Y(t-\delta)$.
\end{lemma}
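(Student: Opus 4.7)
The plan is to recast the difference $Y(t-\delta) - Y(t^-)$ as the stochastic integral of a single ``difference kernel'', invoke the moment estimate of Theorem~\ref{thm:dinunno} to bound its $L_p(\Omega)$-norm by Lebesgue-space norms of that kernel, and then use the hypothesized $L_p([0,t])$-convergence of the kernels to send the right-hand side to zero.

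Before addressing the convergence, I would dispose of the well-definedness claims. The standing assumption that $Y(t-\delta) = \int_0^{t-\delta} g(t-\delta-s)dL(s)$ is well defined together with Lemma~\ref{L-integrands} forces $g(t-\delta-\cdot)\in L_p([0,t-\delta])$, and zero-extension to $(t-\delta,t]$ places it in $L_p([0,t])$. Since $g(t^--\cdot)$ is, by assumption, the $L_p([0,t])$-limit of such functions, it also lies in $L_p([0,t])$ and is therefore $L$-integrable by Lemma~\ref{L-integrands}, so $Y(t^-) = \int_0^t g(t^--s)dL(s)$ makes sense.

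For the convergence step, I would write
\begin{equation*}
Y(t-\delta) - Y(t^-) = \int_0^t h_\delta(s)\, dL(s), \qquad h_\delta(s) := g(t-\delta-s)\mathbf{1}_{(0,t-\delta)}(s) - g(t^--s),
\end{equation*}
and apply Theorem~\ref{thm:dinunno}, using case (a) when $b=0$ (for some $p\geq 1$) and case (b) when $b>0$ (for some $p\geq 2$), to obtain
\begin{equation*}
\E|Y(t-\delta) - Y(t^-)|^p \leq C \Bigl( |a|^p \|h_\delta\|_{L_1([0,t])}^p + b^{p/2} \|h_\delta\|_{L_2([0,t])}^p + \|h_\delta\|_{L_p([0,t])}^p \int_{\R} |z|^p \nu(dz) \Bigr).
\end{equation*}
Since $[0,t]$ has finite Lebesgue measure, H\"older's inequality bounds $\|h_\delta\|_{L_1}$ and $\|h_\delta\|_{L_2}$ by constants times $\|h_\delta\|_{L_p}$, and the hypothesis $g(t-\delta-\cdot)\to g(t^--\cdot)$ in $L_p([0,t])$ is exactly that $\|h_\delta\|_{L_p([0,t])}\to 0$. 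Hence the right-hand side vanishes as $\delta \downarrow 0$, giving convergence in $L_p(\Omega)$, from which convergence in probability follows.

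No step looks genuinely hard: the whole argument is a routine application of Theorem~\ref{thm:dinunno} to a telescoping kernel. The only minor points requiring care are the consistent interpretation of $g(t-\delta-\cdot)$ as an element of $L_p([0,t])$ via zero-extension, so that the limit statement is literally meaningful, and the matching of the assumed moment condition on $\nu$ with the two separate regimes $b=0$ and $b>0$ in Theorem~\ref{thm:dinunno}.
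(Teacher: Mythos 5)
Your proposal is correct and follows essentially the same route as the paper: both apply the moment estimates of Theorem \ref{thm:dinunno} to the difference kernel $g(t-\delta-\cdot)\mathbf{1}_{(0,t-\delta)} - g(t^--\cdot)$ and let the hypothesized $L_p([0,t])$-convergence of the kernels drive the $L_p(\Omega)$-convergence (the paper additionally records the Cauchy property of $(Y(t-\delta))_\delta$, which your direct argument subsumes). The only cosmetic difference is that you make explicit the H\"older step controlling the $L_1$ and $L_2$ norms by the $L_p$ norm, which the paper absorbs into the constant $C_{a,b,\nu}$.
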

\begin{proof}
Since $g(t-\delta-\cdot)\mathbf{1}_{(0,t-\delta)}(\cdot)\in L_p([0,t])$, by convergence, also $g(t^--\cdot)\in L_p([0,t])$, then the corresponding integrals $Y(t-\delta)$ and $Y(t^-)$ are well-defined by Lemma \ref{L-integrands}. We prove the last assertion. It is enough to show that the sequence $(Y(t-\delta))_\delta$ admits a limit in $L_p(\Omega)$.

\noindent
Applying the estimates of Theorem \ref{thm:dinunno} we can see that, for $\delta,\rho>0$ small enough,
\begin{align*}
\mathbb{E}|Y(t-\delta) &- Y(t-\rho)|^p =\\
&= \mathbb{E}\Bigl| \int_0^t [g(t-\delta-s)\mathbf{1}_{(0,t-\delta)}(s) - g(t-\rho-s)\mathbf{1}_{(0,t-\rho)}(s)]dL(s)\Bigr|^p \\
&\leq C_{a,b,\nu}\|g(t-\delta-\cdot)\mathbf{1}_{(0,t-\delta)} - g(t-\rho-\cdot)\mathbf{1}_{(0,t-\rho)}\|_{L_p[0,t]}^p \\
& \quad \longrightarrow 0, \quad for \; \delta,\rho \rightarrow 0.
\end{align*}
Thus the sequence $(Y(t-\delta))_\delta$ is Cauchy in $L_p(\Omega)$.
Analogously, we see that
\begin{align*}
\mathbb{E}|Y(t-\delta) - Y(t^-)|^p &= \mathbb{E}\Bigl| \int_0^t [g(t-\delta-s)\mathbf{1}_{(0,t-\delta)}(s) - g(t^--s)]dL(s)\Bigr|^p \\
&\leq \tilde{C}_{a,b,\nu}\|g(t-\delta-\cdot)\mathbf{1}_{(0,t-\delta)} - g(t^--\cdot)\|_{L_p[0,t]}^p \\
& \quad \longrightarrow 0, \quad for \; \delta \rightarrow 0.
\end{align*}
\end{proof}

Hereafter we study an approximation for the Volterra process $Y=Y(t),\;t\geq 0$, derived by perturbation of the kernel function. 
Let \( g^\e,\;\e\in(0,1) \), be a family of deterministic $L$-integrable kernels and define the corresponding family of Volterra processes  $Y^\e=Y^\e(t),\;t\geq 0$, by 
\begin{equation}
\label{eq:approxx}
 Y^\e(t)=\int_0^t g^\e(t-s)dL(s), \quad t \geq 0.
 \end{equation}
 
\begin{theorem}\label{coro:approx1}
 Let $L=L(t),\;t\geq0$, be a L\'evy process with symmetric L\'evy measure. Consider one of the following situations:
 \begin{enumerate}[(a)]
  \item The L\'evy process has characteristic triplet $(a,0,\nu)$ and $\int_\R|z|^p\nu(dz)<\infty$ for some $p\geq1$.
  \item The L\'evy process has characteristic triplet $(a,b,\nu)$ and $\int_\R|z|^p\nu(dz)<\infty$ for some $p\geq2$.
 \end{enumerate}
 Then, for any $t$, we have the convergence in $L_p(\Omega)$:
 \begin{equation}\label{eq:asterix}
  \|Y^\e(t)-Y(t)\|_{L_p(\Omega)}\longrightarrow0,\quad\text{ as }\e\rightarrow0,
 \end{equation}
 whenever $g(t -\cdot),g^\e(t-\cdot)\in L_p[0,t]$ such that
  \begin{equation}\label{eq:asterix2}
    \|g^\e(t-\cdot)-g(t-\cdot)\|_{L_p[0,t]}\longrightarrow0,\quad\text{ as }\e\rightarrow0.
  \end{equation}
  If \eqref{eq:asterix2} is uniform on \( t\in[0,T] \) $(T<\infty)$, then \eqref{eq:asterix} would be uniform on \( t\in[0,T] \) as well.
\end{theorem}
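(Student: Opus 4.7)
The plan is to reduce the statement to a direct application of the moment estimate in Theorem \ref{thm:dinunno}. By linearity of the integral with respect to deterministic integrands, the difference can be written as a single stochastic integral
$$Y^\e(t) - Y(t) = \int_0^t \bigl[g^\e(t-s) - g(t-s)\bigr]\,dL(s),$$
and the triangle inequality ensures the difference kernel lies in $L_p[0,t]$, so by Lemma \ref{L-integrands} the integral is well-defined.

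Next I would apply the appropriate moment estimate from Theorem \ref{thm:dinunno} — part (a) in case (a) with $p \geq 1$, part (b) in case (b) with $p \geq 2$ — to the difference kernel. This produces a bound on $\E|Y^\e(t) - Y(t)|^p$ in terms of $\|g^\e(t-\cdot) - g(t-\cdot)\|_{L_q[0,t]}^p$ for $q \in \{1,p\}$ in case (a) and $q \in \{1,2,p\}$ in case (b), with coefficients depending only on $a,b,\nu$ and on the constants $C_1,C_2$ which, crucially, do not depend on the kernel.

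I would then use H\"older's inequality on the finite-measure interval $[0,t]$ to dominate each lower-order $L_q$ norm by the $L_p$ norm via $\|f\|_{L_q[0,t]} \leq t^{1/q - 1/p}\,\|f\|_{L_p[0,t]}$ for $q \leq p$. This collapses the whole estimate to
$$\E|Y^\e(t) - Y(t)|^p \leq C_t\,\|g^\e(t-\cdot) - g(t-\cdot)\|_{L_p[0,t]}^p,$$
with $C_t$ depending on $t,a,b,\nu,p$ but not on $g$ or $\e$, so hypothesis \eqref{eq:asterix2} yields \eqref{eq:asterix}. For the uniform statement the prefactor $C_t$ is monotone in $t$, hence bounded by some $C_T$ on $[0,T]$; taking the supremum over $t \in [0,T]$ and invoking the uniform version of \eqref{eq:asterix2} gives the uniform conclusion.

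I do not anticipate a genuine obstacle: the proof is essentially immediate from Theorem \ref{thm:dinunno} together with H\"older on a bounded interval. The only subtle point is the bookkeeping that the constants from Theorem \ref{thm:dinunno} are independent of the integrand — this is explicitly asserted there, and it is exactly what enables the uniformity in $t$.
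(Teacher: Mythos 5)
Your proposal is correct and follows essentially the same route as the paper: write the difference as a single integral of the difference kernel and apply the moment estimates of Theorem \ref{thm:dinunno} directly. The only difference is that you make explicit the H\"older step bounding the lower-order $L_q$ norms by the $L_p$ norm on $[0,t]$, which the paper leaves implicit when it asserts that the right-hand side tends to zero.
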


\begin{proof}
Fix $t \geq 0$.
 Consider case (a). By Theorem \ref{thm:dinunno}(a) there exists some \( C>0 \) such that
 \begin{align*}
  \E\left|Y^\e(t)-Y(t)\right|^p&=\E\left|\int_0^tg^\e(t-s)-g(t-s)dL(s) \right|^p\\
  &\leq C\Big(|a|^p\|g^\e(t-\cdot)-g(t-\cdot)\|^p_{L_1[0,t]}\\
  &\quad\quad\quad+\|g^\e(t-\cdot)-g(t-\cdot)\|^p_{L_p[0,t]}\int_\R|z|^p\nu(dz)\Big)\longrightarrow0,
 \end{align*}
 as $\e\rightarrow0$. Similarly, for the convergence in (b) we apply Theorem \ref{thm:dinunno}(b) and there exists some \( C>0 \) such that
 \begin{align*}
  \E\left|Y^\e(t)-Y(t)\right|^p&=\E\left|\int_0^tg^\e(t-s)-g(t-s)dL(s) \right|^p\\
  &\leq C\Big(|a|^p\|g^\e(t-\cdot)-g(t-\cdot)\|^p_{L_1[0,t]}\\
  &\quad\quad\quad+b^{p/2}\|g^\e(t-\cdot)-g(t-\cdot)\|^p_{L_2[0,t]}\\
  &\quad\quad\quad+\|g^\e(t-\cdot)-g(t-\cdot)\|^p_{L_p[0,t]}\int_\R|z|^p\nu(dz)\Big)\longrightarrow0,
 \end{align*}
 as $\e\rightarrow0$.
\end{proof}

\vspace{3mm}
\noindent In the following example we specify under which assumptions we can approximate the Gamma-Volterra process in~\eqref{eq:keyex} with a semimartingale, using Theorem \ref{coro:approx1}.

\begin{example}\label{ex:Y}
{\bf Approximation of Gamma-Volterra processes.}
Let $Y(t),\;t\geq0$, be the Gamma-Volterra process in~\eqref{eq:keyex} with driving noise $L$, a L\'evy process with the characteristic triplet \( (a,0,\nu) \), where \( \nu \) is a symmetric measure such that $\int_\R|z|^p\nu(dz)<\infty$, for some $p\geq1$.
Fix $t$.
From Lemma \ref{L-integrands} we have that \eqref{eq:keyex} is well defined whenever $g(t-\cdot) \in L_p[0,t]$. That is
 \begin{align*}
  \int_0^t(t-s)^{\beta p}e^{-\lambda p(t-s)}ds\leq\int_0^t(t-s)^{\beta p}ds<\infty,
 \end{align*}
whenever $\beta p +1 > 0$. We shall consider two cases:
\begin{enumerate}
\item[(a)]
$ \lambda \geq 0, \beta \in (-1,0)$ and $\beta p +1 >0$, i.e. $p \in [1, 1/\vert \beta \vert)$;
\item[(b)]
$\lambda \geq 0, \beta > 0$;
\end{enumerate}
Concerning the approximating process $Y^\e$ in \eqref{eq:approxx}, we define 
$$
 g^\e(u) :=(u+\e)^\beta e^{-\lambda (u+\e)} , \qquad u \in (0,t], \; \e \in (0,1\wedge t).
 $$
 Correspondingly, we have
 $$
 Y^\e(t) := \int_0^t g^\e(t-s) dL(s),\;t\geq 0.
 $$
 These processes are well-defined and, applying Theorem \ref{thm:basse}, we can see that $Y^\e$ are semimartingales, since \( g^\e \) are absolutely continuous on \( \R_+ \) and \( (g^\e)' \) is bounded on $[0,t]$ for $0 \leq t <\infty$.
 
 We can also see that $g^\e(t-\cdot)-g(t-\cdot)\in L_p[0,t]$ since $g^\e$ is bounded. Hereafter we give an estimate of this difference and we distinguish the cases in which $\beta$ is positive or negative.

\vspace{2mm}\noindent
(a)$\quad$ We consider the case $\beta  \in (-1,0)$ and $p \in [1, 1/\vert \beta \vert)$.\\
The kernel $s\longmapsto g(t-s)$ is singular at $s=t$ and continuously differentiable, strictly increasing and convex on the interval $[0,t)$. Hence, we have the following inequality for $s \in [0,t)$:
 \begin{equation}\label{N-1}
  g(t-s+\e)-g(t-s)\leq\e \sup_{\theta\in(0,1)}|g'(t-s+\theta\e)|\leq\e |g'(t-s)|.
 \end{equation}
 This yields
 \begin{align*}
  |g^\e(t-s)-g(t-s)|^p 
  &\leq \frac{\e^p}{(t-s)^{p\vert \beta \vert}} \Big[ \frac{\vert \beta \vert}{(t-s)} + \lambda \Big]^p \\
 & \leq  2^p \e^p \frac{ \lambda^p }{(t-s)^{p\vert \beta \vert}} +  2^p \e^p\Big[ \frac{\vert \beta \vert}{(t-s)} \Big]^p , 
  \end{align*}
   where we have used the fact that, for $a,b\geq0$ and $p\geq1$:
 \begin{equation}\label{N0}
  (a+b)^p\leq(2\max(a,b))^p\leq(2a)^p+(2b)^p.
 \end{equation}
 Moreover, also the following crude inequality holds for $s \in [0,t)$:
 \begin{align}\label{*}
  \left|(t-s+\e)^\beta e^{-\lambda(t-s+\e)}-(t-s)^\beta e^{-\lambda (t-s)}\right|^p
  &\leq (t-s)^{\beta p}e^{-\lambda (t-s)p}\\
&  \leq (t-s)^{\beta p}.\notag
 \end{align}
 Hence, we obtain the following estimate:
 \begin{align*}
   \int_0^t|g^\e&(t-s)-g(t-s)|^pds\nonumber\\
   &=\int_0^{t-\e}|g^\e(t-s)-g(t-s)|^pds+\int_{t-\e}^t|g^\e(t-s)-g(t-s)|^pds\nonumber\\
   %&\leq\e^p\int_0^{t-\e}\left|\left(\lambda+\frac{|\beta|}{t-s}\right)(t-s)^\beta \right|^pds+\int_{t-\delta}^t(t-s)^{\beta p}ds\nonumber\\
   &\leq\e^p\frac{(2\lambda)^p}{1- p\vert\beta \vert}\left[t^{1- p \vert\beta \vert} -\e^{1- p \vert\beta \vert}\right]+\e^p\frac{(2|\beta |)^p}{1- (\vert\beta\vert+1)p}\left[t^{1- (\vert\beta\vert+1)p}-\e^{1- (\vert\beta\vert+1)p}\right]\nonumber\\
   &\quad+\frac{\e^{1-\vert\beta\vert p}}{1-\vert\beta\vert p}\leq\e^{1-\vert\beta\vert p}C(\lambda,\beta,p,t)\longrightarrow 0, \quad \e\to 0,
   %\label{eq:eq}
  \end{align*}
  for the given parameters. 

\vspace{2mm}
 \noindent
 (b)$\quad$Now assume \( \beta>0 \). \\
 The function $g(t-\cdot)$ is zero at \( s=t \) and it is continuously differentiable on \( (0,t) \). For \( s \in (0,t) \), we have that
\begin{align*}
 |g(t-s+\e)-g(t-s)|^p 
 &\leq |e^{-\lambda (t-s)} (e^{-\lambda\e}(t-s+\e)^\beta-(t-s)^\beta)|^p\\
 &\leq 2^p |(t-s+\e)^\beta-(t-s)^\beta|^p + 2^p (t-s)^{\beta p} \vert e^{-\lambda \e}-1 \vert^p\\
 &\leq 2^p \e^p \beta^p \sup_{\theta\in (0,1)} (t-s+\e\theta)^{(\beta-1)p} + 2^p (t-s)^{\beta p} \vert e^{-\lambda \e}-1 \vert^p.
\end{align*}
\noindent
We have to distinguish two cases. If \( \beta\geq 1 \), then we have
\begin{align*}
  \int_0^t|g^\e&(t-s)-g(t-s)|^pds\\
  &\leq (2\e\beta)^p\int_0^t(t-s+\e)^{(\beta-1) p}ds+2 ^p  \vert e^{-\lambda \e}-1 \vert^p  \int_0^t(t-s)^{\beta p}ds\\
  &\leq \frac{(2\e\beta)^p}{1+(\beta-1) p}\big[ (t+\e)^{1+(\beta-1) p} - \e^{ 1+(\beta-1) p }\big] 
+2 ^p  \vert e^{-\lambda \e}-1 \vert^p  \int_0^t(t-s)^{\beta p}ds\\
&\leq \e^p C(\lambda,\beta,p,t) \longrightarrow 0, \quad \e\to 0.
 \end{align*} 
 
\vspace{2mm}
 \noindent
 If \( \beta \in (0, 1) \), then 
 \begin{align*}
  \int_0^t|g^\e&(t-s)-g(t-s)|^pds\\
  &\leq (2\e\beta)^p\int_0^{t-\e}(t-s)^{(\beta-1) p}ds+2 ^p  \vert e^{-\lambda \e}-1 \vert^p  \int_0^{t-\e}(t-s)^{\beta p}ds\\
  &\quad +\int_{t-\e}^t(t-s+\e)^{\beta p}ds\\
  &= \frac{(2\e\beta)^p}{1+(\beta-1) p}\big[ t^{1+(\beta-1) p} - \e^{ 1+(\beta-1) p }\big] 
+\frac{2 ^p  \vert e^{-\lambda \e}-1 \vert^p}{1+\beta p}\big[ t^{1+\beta p} - \e^{ 1+\beta p }\big] \\
&\quad + \frac{\e^{1+\beta p}}{1+\beta p}\\
  &\leq \e^{\min(p,1+\beta p)}C(\lambda,\beta,p,t) \longrightarrow 0, \quad \e\to 0.
   \end{align*}
The estimates are uniform on $t\in [0,T]$ $(T<\infty)$.
\end{example}

\begin{example}\label{Thao}
Here we consider $Y$ to be the Gamma-Volterra process in~\eqref{eq:keyex} with L\'evy driver $L$ associated to the characteristic triplet \( (a,b,\nu) \), with \( b>0 \) and a symmetric measure \( \nu \) such that $\int_\R|z|^p\nu(dz)<\infty$, for some $p$.
In this case Lemma \ref{L-integrands} guarantees that \eqref{eq:keyex} is well defined if, for all \( t\geq0 \), $g(t-\cdot) \in L_p[0,t]$ with \( p\geq2 \).

\noindent This is guaranteed if we have one of the following cases:
\begin{enumerate}
\item[(a)]
$ \lambda \geq 0, \beta \in (-1/2,0)$ and $p \in [2, 1/\vert \beta \vert)$;
\item[(b)]
$\lambda \geq 0, \beta > 0$.
\end{enumerate}
Referring to Theorem \ref{coro:approx1}, the same analysis of Example \ref{ex:Y} leads to the convergence of \( Y(t)^\e \longrightarrow Y(t) \) in \( L_p(\Omega) \) in both cases.
\end{example}
We remark that Example \ref{ex:Y} and Example \ref{Thao} extend in a non-trivial way the work of Thao and Nguyen \cite{thao2}, see Theorem $1$, and also Thao \cite{thao}, see Theorem $2.3$, where an approximation of fractional Brownian motion is considered.

%%%%%%%%%%%%%%%%%%%%%%%%%%%%%%%%%%%%%%%%%%%%%%%%%%%%%%%%%%%%%%%%%%%%%%%%%%%%

\section{Pathwise Volterra integrals and their approximation}
Now that \( Y \) is well characterized, we proceed by reviewing stochastic integration with respect to $Y$ as integrator.

Naturally, in the case when $Y$ is a semimartingale and the integrand $X$ is predictable, integration can be carried out via It\^o-type calculus with respect to the random measure generated by $Y$.
See e.g. \cite{BJ1983} and \cite{CK2015}.
In \cite{BNBPV} (see also \cite{dinunnovives}) a stochastic integral with respect to $Y$ has been constructed by means of the Malliavin calculus with respect to the Brownian motion and the centered Poisson random measure. This approach does not consider the L\'evy driving noise as a whole, but treats the Gaussian and the centered Poisson random measure separately, and it is well-set when the kernel \( g \) is not degenerate at \( 0 \). 
Also \cite{BM2008} proposes a Skorohod-type integral based on the $S$-transform for a pure jump centered $L$.

In this paper we consider a pathwise-type of integration with respect to $Y$ as introduced in \cite{dinunno} in the lines of \cite{zahle1, zahle2, zahle3}. This is based on fractional calculus, see \cite{SKM1993} for a detailed background.

\subsection{Generalized Lebesgue-Stieltjes integrals with respect to Volterra processes}
First we recall some definitions from fractional calculus, that we are going to use to define the integral of our interest.
\subsubsection*{Elements of fractional calculus}
For a deterministic real-valued function $f\in L_1(a,b)$ $(-\infty < a <b<\infty)$, the Riemann-Liouville left- and right-sided fractional integrals\footnote{In the definitions in \cite{zahle1, zahle2, zahle3} there is a $(-1)^\alpha$ term, originally used by Liouville. The interest in those papers is mostly about harmonic calculus, while in a different context we decided to omit such term.} of order $\alpha>0$ are defined by
\[
 \mathcal{I}^\alpha_{a^+}f(x):=\frac{1}{\Gamma(\alpha)}\int_a^xf(y)(x-y)^{\alpha-1}dy,
\]
and 
\[
 \mathcal{I}^\alpha_{b^-}f(x):=\frac{1}{\Gamma(\alpha)}\int_x^bf(y)(y-x)^{\alpha-1}dy,
\]
respectively, if the integrals converge for a.a. $x \in (a,b)$.
Here $\Gamma$ denotes the Gamma function. The fractional integrals above are well-defined for all $f \in L_q(a,b)$ if $1 \leq q < \frac{1}{\alpha}$. 

\vspace{2mm}
If $f \in L_q(a,b)$ and $g \in L_p(a,b)$ for $ p, q \geq 1 \, : \, \frac{1}{p} + \frac{1}{q} \leq 1+\alpha$ and $\frac{1}{p} + \frac{1}{q} = 1+\alpha$ if $p, q >1$, then the integration by parts
\begin{equation*}
\int_a^b g(x)I_{a^+}^\alpha f(x)dx = \int_a^b f(x)I_{b^-}^\alpha g(x)dx
\end{equation*}
holds. 
This motivates the introduction of the fractional derivatives as a form of inverse operator to the fractional integral. For this we work with a class of functions for which these concepts are well defined. For $q\geq 1$, let $\mathcal{I}^\alpha_{a^+}(L_q)$ be the set of functions $f:(a,b) \longrightarrow \R$ for which there exists $\varphi 
\in L_q(a,b)$ such that $f=\mathcal{I}^\alpha_{a^+}\varphi$. It can be shown that the function $\varphi$ is unique in $L_q(a,b)$ (see \cite{Mishura-book} Lemma 1.1.2  and comments). Also, if $q>1$, $f \in I_{a^+}^\alpha(L_q)$ if and only if $ f \in L_q(a,b)$ and there is $L_q$-convergence for $\delta \downarrow 0$ of the function
\begin{equation*}
\int_a^{x-\delta} \frac{f(x)-f(y)}{(x-y)^{\alpha+1}}dy, \qquad x \in (a,b),
\end{equation*}
(where $f(y)=0$ for $y \notin [a,b]$). The conditions are sufficient if $q=1$. 
Analogously, set $\mathcal{I}^\alpha_{b^-}(L_q)$ to be the set of functions $f$ for which there exists $\varphi \in L_q(a,b)$ such that $f=\mathcal{I}^\alpha_{b^-}\varphi$.
For $q>1$ we have that  $f \in I_{b^-}^\alpha(L_q)$ if and only if $ f \in L_q(a,b)$ and there is $L_q$-convergence for $\delta \downarrow 0$ of the function
\begin{equation*}
\int_{x+\delta}^b \frac{f(x)-f(y)}{(y-x)^{\alpha+1}}dy, \qquad x \in (a,b).
\end{equation*}
Again the conditions are sufficient if $q=1$. 

Furthermore, for $\alpha\in (0,1)$ and all $f\in \mathcal{I}^\alpha_{a^+}(L_q)$, the function $\varphi$ coincides $a.e.$ with the Riemann-Liouville left-sided fractional derivative defined as the inverse operator of $\mathcal{I}^\alpha_{a^+}$. Namely, $\varphi$ is $a.e.$ equal to
\begin{equation*}
\mathcal{D}^\alpha_{a^+}f(x)=\frac{d}{dx} \mathcal{I}^{1-\alpha}_{a^+} f(x), \quad x\in (a,b).
\end{equation*}
Correspondingly, for $f \in I_{b^-}^\alpha(L_q)$, we have the right-sided fractional derivative
\begin{equation*}
\mathcal{D}^\alpha_{b^-}f(x)= - \frac{d}{dx} \mathcal{I}^{1-\alpha}_{b^-} f(x), \quad x\in (a,b).
\end{equation*}
In this cases the Riemann-Liouville fractional derivatives admit the respective Weyl representations:
\begin{align*}
 \mathcal{D}^\alpha_{a^+}f(x)&=\frac{1}{\Gamma(1-\alpha)}\left(\frac{f(x)}{(x-a)^\alpha}+\alpha\int_a^x\frac{f(x)-f(y)}{(x-y)^{\alpha+1}}dy\right)\mathbf{1}_{(a,b)}(x),\\
 \mathcal{D}^\alpha_{b^-}f(x)&=\frac{1}{\Gamma(1-\alpha)}\left(\frac{f(x)}{(b-x)^\alpha}+\alpha\int_x^b\frac{f(x)-f(y)}{(y-x)^{\alpha+1}}dy\right)\mathbf{1}_{(a,b)}(x).
\end{align*}
The convergence of the integrals is in $L_q$, if $q>1$, and it is pointwise $a.e.$, if $q=1$. We recall that, for all $\alpha\in (0,1)$, for all $f\in \mathcal{C}^1(a,b)$, the derivatives $\mathcal{D}^\alpha_{a^+}f$ and $\mathcal{D}^\alpha_{b^-}f$ exist and are in $L_q(a,b)$ for $1 \leq q < \frac{1}{\alpha}$.

\vspace{2mm}
Let $f,g:[a,b]\rightarrow\R$. Assume that the limits
\[
 f(t^+):=\lim_{\delta\searrow0}f(t+\delta),\qquad g(t^-):=\lim_{\delta\searrow0}g(t-\delta),
\]
exist for $a\leq t\leq b$ and denote
\begin{align*}
 f_{a^+}(x)&:=\mathbf{1}_{(a,b)}(x)\bigl(f(x)-f(a^+)\bigr),\\
 g_{b^-}(x)&:=\mathbf{1}_{(a,b)}(x)\bigl(g(b^-)-g(x)\bigr).
\end{align*}
\begin{definition}\label{detGLS}
Assume that $f_{a^+}\in\mathcal{I}^\alpha_{a^+}(L_q)$ and $g_{b^-}\in\mathcal{I}^{1-\alpha}_{b^-}(L_p)$ for some $p^{-1}+q^{-1}\leq1$, and $0<\alpha<1$. The \textit{generalized fractional Lebesgue-Stieltjes integral} of $f$ with respect to $g$ is defined by
 \begin{equation*}
  \int_a^bf(x)dg(x):=\int_a^b\mathcal{D}^\alpha_{a^+}f_{a^+}(x)\mathcal{D}^{1-\alpha}_{b^-}g_{b^-}(x)dx+f(a^+)(g(b^-)-g(a^+)).
 \end{equation*}
\end{definition}
Naturally, the conditions $f_{a^+}\in\mathcal{I}^\alpha_{a^+}(L_q)$ and $g_{b^-}\in\mathcal{I}^{1-\alpha}_{b^-}(L_p)$ mean that $\mathcal{D}^\alpha_{a^+}f_{a^+} \in L_q(a,b)$ and $ \mathcal{D}^{1-\alpha}_{b^-}g_{b^-}\in L_p(a,b)$.
Hence the integral on the right-hand side is well-defined. It can be shown that the definition of the integral does not depend on $\alpha$, see \cite[Proposition 2.1]{zahle1}.
 
Moreover, for $1 \leq q < \frac{1}{\alpha}$, we have that  $f_{a^+}\in\mathcal{I}^\alpha_{a^+}(L_q)$ if and only if $f\in\mathcal{I}^\alpha_{a^+}(L_q)$ and $f(a^+)$ exists. Then the generalized Lebesgue-Stieltjes integral admits a simplified representation as
 \begin{align*}
  \int_a^bf(x)dg(x):=\int_a^b\mathcal{D}^\alpha_{a^+}f(x)\mathcal{D}^{1-\alpha}_{b^-}g_{b^-}(x)dx.
 \end{align*}

\vspace{2mm}
\noindent
Motivated by the above considerations, the following definition can be given, see \cite{dinunno}.
\begin{definition}\label{stoGLS}
Two real-valued measurable stochastic processes $X=X(t)$, $t\geq0$, and 
$Y=Y(t)$, $t\geq0$, are \emph{fractionally $\alpha$-connected for some $t$ and for some $\alpha\in (0,1)$}, if the generalized Lebesgue-Stieltjes integral
\begin{equation}\label{eq:gLSint}
 \int_0^tX(s)dY(s):=\int_0^t\left(\mathcal{D}^\alpha_{0^+}X\right)(s)\left(\mathcal{D}^{1-\alpha}_{t^-}Y_{t^-}\right)(s)ds,
\end{equation}
exists \( \prob \)-$a.s.$
\end{definition}
From fractional calculus we recall that the integral above exists and does not depend on $\alpha$ whenever $X\in \mathcal{I}^\alpha_{0+}(L_q)$ and $Y_{t^-} \in \mathcal{I}^{1-\alpha}_{t^-}(L_p)$ $\prob$-$a.s.$ Here the random variables $Y(t^-)$ are well-defined, see Lemma \ref{lemma:leftlimit}.
In general, we know that the integral above exists if $\mathcal{D}^\alpha_{0^+}X \in L_q(0,t)$ and $ \mathcal{D}^{1-\alpha}_{t^-}Y_{t^-}\in L_p(0,t)$, $\prob$-$a.s.$ for some $p^{-1} + q^{-1} =1$. Then the following definitions appear naturally.

On the time horizon $[0,T]$ $(T<\infty)$, for $p,q\in[1,\infty) \, : \, p^{-1}+q^{-1}=1$ and $0<\alpha<1$, define the sets of stochastic integrands \( X=X(t),\;t\in[0,T] \):
\begin{align*}
 \mathcal{D}^+_{q}(\alpha,T)&:=\Big{\{}X:\;\int_0^T|\left(\mathcal{D}^\alpha_{0^+}X\right)(s)|^qds<\infty\;a.s.\Big{\}},\\
 \mathcal{D}^+_{\infty}(\alpha,T)&:=\Big{\{}X:\;\sup_{0\leq s\leq T}|\left(\mathcal{D}^\alpha_{0^+}X\right)(s)|<\infty\;a.s.\Big{\}},
\end{align*}
and integrators $Y=Y(t),\;t\in[0,T]$:
\begin{align*}
 \mathcal{D}^-_{p}(\alpha,T)&:=\Big{\{}Y:\;\int_0^t|\left(\mathcal{D}^{1-\alpha}_{t^-}Y_{t^-}\right)(s)|^pds<\infty\;a.s.,\;t\in[0,T]\Big{\}},\\
 \mathcal{D}^-_{\infty}(\alpha,T)&:=\Big{\{}Y:\;\sup_{0\leq s\leq t}|\left(\mathcal{D}^{1-\alpha}_{t^-}Y_{t^-}\right)(s)|<\infty\;a.s.,\;t\in[0,T]\Big{\}}.
\end{align*}
It is easy to see that the couples $(X,Y)\in\mathcal{D}^+_{1}(\alpha,T)\times\mathcal{D}^-_{\infty}(\alpha,T)$, $(X,Y)\in\mathcal{D}^+_{\infty}(\alpha,T)\times\mathcal{D}^-_{1}(\alpha,T)$, and $(X,Y)\in\mathcal{D}^+_{q}(\alpha,T)\times\mathcal{D}^-_{p}(\alpha,T)$ are fractionally $\alpha$-connected for all $t \in [0,T]$.
Then we say that the elements in $\mathcal{D}^-_{p}(\alpha,T)$ are the \emph{appropriate $(p,\alpha)-$integrators}, $p\in[1,\infty]$, for the elements in $\mathcal{D}^+_{q}(\alpha,T)$, $q=\frac{p}{p-1}$, with the conventions that $\frac{1}{0}=\infty$ and $\frac{\infty}{\infty}=1$.

\vspace{2mm}
Hereafter we formulate the concept of two processes being fractionally $\alpha$-connected in terms of expectations. This is a direct consequence of Theorem \ref{thm:dinunno}. We define new classes of integrands and integrator processes with conditions that are easier to verify and which are included in the previously given classes.
We define the sets:
\begin{align*}
 \E\mathcal{D}^-_{p}(\alpha,T)&:=\Big{\{}Y:\;\int_0^t\E|\left(\mathcal{D}^{1-\alpha}_{t^-}Y_{t^-}\right)(s)|^pds<\infty\;,\;t\in[0,T]\Big{\}}\subset\mathcal{D}^-_{p}(\alpha,T),\\
 \E\mathcal{D}^-_{\infty}(\alpha,T)&:=\Big{\{}Y:\;\sup_{0\leq s\leq t}\E|\left(\mathcal{D}^{1-\alpha}_{t^-}Y_{t^-}\right)(s)|<\infty\;,\;t\in[0,T]\Big{\}}\subset\mathcal{D}^-_{\infty}(\alpha,T),
\end{align*}
and
\begin{align*}
 \E\mathcal{D}^+_{q}(\alpha,T)&:=\Big{\{}X:\;\int_0^T\E|\left(\mathcal{D}^\alpha_{0^+}X\right)(s)|^qds<\infty\Big{\}}\subset\mathcal{D}^+_{q}(\alpha,T),\\
 \E\mathcal{D}^+_{\infty}(\alpha,T)&:=\Big{\{}X:\;\sup_{0\leq s\leq T}\E|\left(\mathcal{D}^\alpha_{0^+}X\right)(s)|<\infty\Big{\}}\subset\mathcal{D}^+_{\infty}(\alpha,T).
\end{align*}
Again, we say that the elements in $\E\mathcal{D}^-_{p}(\alpha,T)$ are the \emph{appropriate\\ $(p,\alpha)-$integrators}, $p\in[1,\infty]$, for the elements in $\E\mathcal{D}^+_{q}(\alpha,T)$, $q=\frac{p}{p-1}$.

\vspace{3mm}
\noindent\textbf{Remark.} It is easy to see that for the couples $(X,Y) \in \E\mathcal{D}^+_{q}(\alpha,T)\times \E\mathcal{D}^-_{p}(\alpha,T)$, the generalized Lebesgue-Stieltjes integral \eqref{eq:gLSint} exists both $\mathbb{P}$-$a.s.$ and in $L_1(\mathbb{P})$.

\vspace{3mm}
\noindent\textbf{Remark: Relationship with other types of stochastic integration.}
The definition of generalised Lebesgue-Stieltjes integral (Definition \ref{detGLS}) can be extended, see \cite{zahle2} Definition 4.4, which is motivated by Lemma 4.1 and Lemma 4.2 in the same reference. This leads to the Definition 4.7 of stochastic integral in \cite{zahle2}, which also extends the forward integral introduced in \cite{russo} Section 1.

{\it All these stochastic integrals coincide with the It\^o integral whenever the integrator is a semimartingale, the integrand is an adapted c\`agl\`ad process, and the convergences are uniformly on compacts in probability (ucp).}
See \cite{russo} Proposition 1.1, \cite{zahle2} Proposition 4.9, \cite{zahle3} Section 5.
This result also applies to the stochastic integral of Definition \ref{stoGLS} in the present paper. In fact two functionally $\alpha$-connected processes are integrable as per \cite{zahle2} Definition 4.7.

%\vspace{0.5cm}
\subsubsection*{A L\'evy driven Volterra process as integrator}
\noindent We now review the case of L\'evy driven Volterra processes \eqref{eq:mainint} as integrators. The following result relies on the estimate of Theorem \ref{thm:dinunno} (b). See \cite{dinunno} Section $5$.

\begin{theorem} \label{thm:dinunno1}
%%%%%%%%%%%%%%%%%% take up to here!
Let \( Y=Y(t)=\int_0^tg(t-s)dL(s),\;t\in[0,T], \) be a Volterra process where \( L=L(t),\;t\geq0 \) is a L\'evy process with the characteristic triplet \( (a,b,\nu) \), for \( b\geq0 \), and \( \nu \) a symmetric L\'evy measure such that $\int_\R|z|^p\nu(dz)<\infty$ for some $p$ with $p\geq1$ if $b=0$ or $p\geq2$ if $b>0$. Moreover, for this value of \( p \), assume that $g=g(t-\cdot)\in L_p[0,t]$ for any $t\in[0,T]$ and the following set of conditions for some \( \alpha\in(0,1) \):

 \noindent\textbf{Assumptions ($D_p$)}
 \begin{enumerate}[(i)]
  \item $\int_0^t(t-s)^{\alpha p-p}\left(\int_s^t|g(t-v)|^pdv\right)ds<\infty$,
  \item $\int_0^t(t-s)^{\alpha p-p}\left(\int_0^s|g(t-v)-g(s-v)|^pdv\right)ds<\infty$,
  \item $\int_0^t\int_s^t(u-s)^{\alpha p-2p}\left(\int_s^u|g(u-v)|^pdv\right)duds<\infty$,
  \item $\int_0^t\int_s^t(u-s)^{\alpha p-2p}\left(\int_0^s|g(u-v)-g(s-v)|^pdv\right)duds<\infty$.
 \end{enumerate}
 Then $Y\in\E\mathcal{D}^-_p(\alpha,T)$, so, $Y$ is an appropriate $(p,\alpha)$-integrator for any $X\in\mathcal{D}^+_q(\alpha,T)$ with $q^{-1}+p^{-1}=1$.
\end{theorem}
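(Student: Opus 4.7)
The plan is to start from the Weyl representation of the right-sided fractional derivative. Applied to $Y_{t^-}(x)=Y(t^-)-Y(x)$, using $Y_{t^-}(s)-Y_{t^-}(u)=Y(u)-Y(s)$, it gives, for $s\in(0,t)$,
\[
\mathcal{D}^{1-\alpha}_{t^-}Y_{t^-}(s)=\frac{1}{\Gamma(\alpha)}\left(\frac{Y(t^-)-Y(s)}{(t-s)^{1-\alpha}}+(1-\alpha)\int_s^t\frac{Y(u)-Y(s)}{(u-s)^{2-\alpha}}du\right).
\]
Via $|A+B|^p\leq 2^{p-1}(|A|^p+|B|^p)$, the conclusion $Y\in\E\mathcal{D}^-_p(\alpha,T)$ is reduced to showing that, for every $t\in[0,T]$, the two quantities
\[
I_1(t):=\int_0^t(t-s)^{(\alpha-1)p}\,\E|Y(t^-)-Y(s)|^p\,ds,\qquad I_2(t):=\int_0^t\E\left|\int_s^t\frac{Y(u)-Y(s)}{(u-s)^{2-\alpha}}du\right|^p ds
\]
are finite.

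The common input for both is the increment decomposition
\[
Y(u)-Y(s)=\int_0^s\bigl[g(u-v)-g(s-v)\bigr]dL(v)+\int_s^u g(u-v)dL(v),\quad 0\leq s<u\leq t,
\]
which at $u=t^-$ is interpreted in the sense of Lemma \ref{lemma:leftlimit}. Applying Theorem \ref{thm:dinunno} to each of the two disjoint-range stochastic integrals (in version (a) or (b) according to whether $b=0$ or $b>0$) and using $|A+B|^p\leq 2^{p-1}(|A|^p+|B|^p)$ once more yields
\[
\E|Y(u)-Y(s)|^p\leq C\left(\int_0^s|g(u-v)-g(s-v)|^p dv+\int_s^u|g(u-v)|^p dv\right),
\]
with $C=C(a,b,\nu,p,T)$; the $L_1$- and $L_2$-norm contributions coming from Theorem \ref{thm:dinunno} are absorbed into the $L_p$-norms by Hölder's inequality on the bounded interval $[0,T]$, and so need not be tracked separately.

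Plugging this moment bound into $I_1(t)$, with $g(t^--\cdot)$ understood as the $L_p$-limit from Lemma \ref{lemma:leftlimit}, produces exactly the two double integrals of assumptions $(D_p)$-(i) and (ii), hence $I_1(t)<\infty$. For $I_2(t)$ I first apply Hölder's inequality in $u$, namely $|\int_s^t f(u)du|^p\leq (t-s)^{p-1}\int_s^t|f(u)|^p du$, to $f(u)=(Y(u)-Y(s))(u-s)^{\alpha-2}$; after taking expectation, inserting the moment bound above and using $(t-s)^{p-1}\leq T^{p-1}$, the integration in $s$ produces precisely assumptions $(D_p)$-(iii) and (iv), hence $I_2(t)<\infty$. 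Summing the four estimates gives $Y\in\E\mathcal{D}^-_p(\alpha,T)$, and the $(p,\alpha)$-integrator property for every $X\in\mathcal{D}^+_q(\alpha,T)$ with $q=p/(p-1)$ then follows from the definition of appropriate integrator recalled just before the theorem.

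The most delicate point will be the matching between the Weyl representation, which naturally involves $Y(t^-)$ and $g(t^--\cdot)$, and assumptions $(D_p)$, which are written with $g(t-\cdot)$. This is handled by Lemma \ref{lemma:leftlimit}, which simultaneously ensures that $Y(t^-)\in L_p(\Omega)$ admits the integral representation $\int_0^t g(t^--s)dL(s)$ and that any $L_p[0,t]$-condition on $g(t-\cdot)$ transfers to $g(t^--\cdot)$ through the $L_p$-limit. The remaining work—routine control of the lower-order $L_1$- and $L_2$-terms and bookkeeping of the constants $C(a,b,\nu,p,T)$—introduces no new hypothesis.
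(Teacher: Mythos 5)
Your proposal is correct and follows essentially the same route as the paper's (the paper defers the proof to \cite{dinunno}, Section 5, but the identical machinery is carried out explicitly for the difference process $Y^\e-Y$ in the proof of Theorem \ref{prop:approx2}): Weyl representation of $\mathcal{D}^{1-\alpha}_{t^-}Y_{t^-}$, decomposition of the increment $Y(u)-Y(s)$ into the two disjoint-range stochastic integrals, the moment estimates of Theorem \ref{thm:dinunno} with the lower-order norms absorbed by H\"older, and the matching of the resulting double and triple integrals to $(D_p)$(i)--(iv). The only step you leave implicit is the verification that $Y_{t^-}\in\mathcal{I}^{1-\alpha}_{t^-}(L_p)$ $\prob$-a.s., which is needed before the Weyl representation can be invoked; as in the concluding paragraph of the paper's proof of Theorem \ref{prop:approx2}, this follows by a dominated convergence argument from the very bounds you establish, so it is a presentational omission rather than a gap.
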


\vspace{2mm}
\begin{example}\label{ex:appr}
 \textbf{The Gamma-Volterra process \( Y \) as an appropriate $(p,\alpha)$-integrator.} 
 In this example, we find the conditions on the parameters $\alpha, p$ depending on $ \beta, \lambda $ so that the Gamma-Volterra process $Y$ in \eqref{eq:keyex} is an appropriate $(p,\alpha)$-integrator. 
 From Lemma \ref{L-integrands} and Example~\ref{ex:Y} we already know that $Y$ is well defined if $\beta>0$ and if $\beta \in (-1,0)$ with $ 1 + \beta p > 0 $. 
 We consider a L\'evy driving noise with characteristic triplet $(a,0,\nu)$.
 The case $(a,b,\nu)$ with $b>0$ is treated similarly, cf. Example \ref{Thao}.
 
 Recall the set of conditions on the kernel function \( g \) in Theorem \ref{thm:dinunno1}. We go through the list, and find conditions on the parameters $\alpha, \beta$ and $p$ in order for (i)-(iv) of ($D_p$) to be satisfied.
 \begin{enumerate}[(i)]
  \item The innermost integral in (i) can be estimated by the following:
   \begin{align*}
    \int_s^t\left|(t-v)^\beta e^{-\lambda(t-v)}\right|^pdv&\leq \int_s^t(t-v)^{\beta p}dv=\frac{(t-s)^{1+\beta p}}{1+\beta p},
   \end{align*}
   where the integral is well defined since $1+\beta p>0$. We calculate the outer integral of (i), and find an estimate:
   \begin{align*}
    \int_0^t(t-s)^{\alpha p-p}\left(\int_s^t|g(t-v)|^pdv\right)ds&\leq\int_0^t(t-s)^{\alpha p-p}\frac{(t-s)^{1+\beta p}}{1+\beta p}ds\\
    &=\frac{1}{1+\beta p}\int_0^t(t-s)^{1+(\alpha+\beta-1)p}ds\\
    &=\frac{t^{2+(\alpha+\beta-1)p}}{(1+\beta p)(2+(\alpha+\beta-1)p)},
   \end{align*}
   where the integral is well defined when $2+(\alpha+\beta-1)p>0$.
  \item We need to separate the cases in which $\beta$ is positive or negative.\\
  For $\beta > 0$ , by \eqref{N0}, we have that
  \begin{align}
  \left|(t-v)^\beta e^{-\lambda(t-v)}-(s-v)^\beta e^{-\lambda (s-v)}\right|^p &\leq 2^p(t-v)^{\beta p}e^{-\lambda (s-v)p} \label{N1}\\
  &\leq 2^p(t-v)^{\beta p} \nonumber.
  \end{align}
  Hence we can estimate the integral in (ii) as follows:
  \begin{align*}
   \int_0^t(t-s)^{\alpha p-p}&\left(\int_0^s|g(t-v)-g(s-v)|^pdv\right)ds\\
   & \leq \int_0^t 2^p (t-s)^{\alpha p-p}\left(\int_0^s (t-v)^{\beta p} dv\right)ds\\
   & = \int_0^t \frac{2^p}{1 + \beta p} \Bigl( (t-s)^{\alpha p - p + 1 + \beta p} - (t-s)^{\alpha p - p} \; t^{1 + \beta p}\Bigr)ds.
  \end{align*}
  The integral above is finite for $ 1 + \alpha p - p > 0 $.\\
  For $\beta < 0$ we have that
  \begin{equation}\label{N2}
  \left|(t-v)^\beta e^{-\lambda(t-v)}-(s-v)^\beta e^{-\lambda (s-v)}\right|^p \leq (s-v)^{\beta p}.
  \end{equation}
  Then we have the following estimate for the integral in (ii):
  \begin{align*}
   \int_0^t(t-s)^{\alpha p-p}&\left(\int_0^s|g(t-v)-g(s-v)|^pdv\right)ds\\
   & \leq \int_0^t (t-s)^{\alpha p-p}\left(\int_0^s (s-v)^{\beta p} dv\right)ds.
  \end{align*}
  The innermost integral is finite as $1 + \beta p>0$ and increasing in $s$. Then the estimate above is finite for $1+\alpha p - p > 0$.
  \item The innermost integrals of (i) and (iii) are the same with the only attention to be given to the range of integration, so:
  \[
   \int_s^u\left|(u-v)^\beta e^{-\lambda(u-v)}\right|^pdv\leq\frac{(u-s)^{1+\beta p}}{1+\beta p},
  \]
  which is well-defined as $1+\beta p>0$. The second layer of integrals is then dominated by
  \begin{equation*}
   \int_s^t(u-s)^{\alpha p-2p}\left(\frac{(u-s)^{1+\beta p}}{1+\beta p}\right)du=\frac{1}{1+\beta p}\int_s^t(u-s)^{1+p(\alpha+\beta-2)}du,
  \end{equation*}
  which is finite whenever $2+(\alpha+\beta-2)p>0$. The outermost integral of (iii) is then also clearly finite.
  \item Similar to the study of (ii) here we also have to separate the cases $\beta \in (-1,0)$ and $\beta >0$. Moreover, our estimates need to be sharper than before.
  
  For $\beta<0$, we go through the integral:
  \begin{align}
   \int_0^t&\int_s^t(u-s)^{\alpha p-2p}\left(\int_0^s|g(u-v)-g(s-v)|^pdv\right)duds \label{N3}\\
   &= A_1 + A_2 + B \nonumber
   \end{align}
   by splitting the integration range in an opportune way. We start by considering
   \begin{align*}
   A_1:=&\int_0^{t/2}\int_s^{2s}(u-s)^{\alpha p-2p}\left(\int_0^{2s-u}|g(u-v)-g(s-v)|^pdv \right.\\
   & \hspace{4cm} \left. +\int_{2s-u}^s|g(u-v)-g(s-v)|^pdv\right)duds.
   \end{align*}
   By application of \eqref{N-1} with $(u-s)$ in the place of $\e$, we observe that
   \begin{equation*}
   |g(u-v)-g(s-v)|^p \leq (u-s)^p |g'(s-v)|^p \leq (u-s)^p \beta^p (s-v)^{(\beta-1)p}.
   \end{equation*}
   Thus we have
   \begin{align*}
   A_1 &\leq \int_0^{t/2}\int_s^{2s}(u-s)^{\alpha p-2p}\left(\int_0^{2s-u}|\beta|^p (u-s)^p (s-v)^{(\beta-1)p}dv \right.\\
   & \hspace{4.5cm} \left. +\int_{2s-u}^s (s-v)^{\beta p}dv\right)duds\\
   &= \int_0^{t/2}\int_s^{2s} \left(\frac{|\beta|^p (u-s)^{\alpha p -p}}{1+(\beta-1)p} (s^{1+(\beta-1)p} - (u-s)^{1+(\beta-1)p}) \right.\\
   & \hspace{2.5cm} \left. +\frac{(u-s)^{1+\beta p +\alpha p - 2p}}{1+\beta p} \right)duds\\
   &= \int_0^{t/2} C(\beta,\alpha,p) \; s^{2+\alpha p +\beta p - 2p} ds,
   \end{align*}
   This integral is finite when $2 + (\alpha + \beta - 2)p > 0$. Then, using \eqref{N2}, we consider
   \begin{align*}
   A_2 &:=\int_0^{t/2}\int_{2s}^t(u-s)^{\alpha p-2p}\left(\int_0^s|g(u-v)-g(s-v)|^pdv\right)duds\\
   & \leq \int_0^{t/2}\Bigl(\int_{2s}^t(u-s)^{\alpha p-2p}du\Bigr)\Bigl(\int_0^s (s-v)^{\beta p} dv\Bigr)ds\\
   &= \int_0^{t/2}\frac{s^{1+\beta p}}{1+\beta p}\Bigl(\int_{2s}^t(u-s)^{\alpha p-2p}du\Bigr)ds\\
   &= \int_0^{t/2} \frac{s^{1+\beta p}}{(1+\beta p)(1+\alpha p - 2p)}((t-s)^{1+\alpha p-2p} - s^{1+\alpha p - 2p})ds,
   \end{align*}
   which is finite for $2 + \alpha p - 2p > 0$. The last summand in \eqref{N3} is given by
   \begin{equation*}
   B:=\int_{t/2}^t\int_s^t(u-s)^{\alpha p-2p}\left(\int_0^s|g(u-v)-g(s-v)|^pdv\right)duds.
   \end{equation*}
  By using the same estimates as for $A_2$, we get that $B$ is finite for $2 + \alpha p - 2p > 0$.
  
  For $\beta >0$, as in case (b) in Example \ref{ex:Y}, we obtain the following inequality:
  \begin{align*}
  |g(u-v)-g(s-v)|^p &\leq 2^p (u-s)^p \beta^p \sup_{\theta\in (0,1)} (s-v+\theta(u-s))^{(\beta-1)p}\\
  & \quad + 2^p (s-v)^{\beta p} \vert e^{-\lambda(u-s)}-1 \vert^p,
  \end{align*}
  and again we have to distinguish two cases. If $\beta \geq 1$, then we have
  \begin{align*}
  \int_0^t&\int_s^t(u-s)^{\alpha p-2p}\left(\int_0^s|g(u-v)-g(s-v)|^pdv\right)duds\\
  & \leq \int_0^t\int_s^t(u-s)^{\alpha p-2p} \Bigl(\frac{2^p (u-s)^p \beta^p}{1+(\beta -1)p}(u^{1+(\beta -1)p} - (u-s)^{1+(\beta -1)p}) \Bigr.\\
  & \hspace{4cm} \Bigl. + \frac{2^p \lambda^p(u-s)^p}{1+\beta p}s^{1+\beta p} \Bigr)duds\\
  & \leq C(\lambda, \beta, p, t) \int_0^t\int_s^t(u-s)^{\alpha p-p}duds.
  \end{align*}
  Then the integral is finite for $1+\alpha p - p > 0$. If $0 < \beta < 1$, we have
  \begin{align*}
  \int_0^t&\int_s^t(u-s)^{\alpha p-2p}\left(\int_0^s|g(u-v)-g(s-v)|^pdv\right)duds\\
  & \leq \int_0^t\int_s^t(u-s)^{\alpha p-2p} \Bigl(\frac{2^p (u-s)^p \beta^p}{1+(\beta -1)p}s^{1+(\beta -1)p} \Bigr.\\
  & \hspace{4cm} \Bigl. + \frac{2^p \lambda^p(u-s)^p}{1+\beta p}s^{1+\beta p} \Bigr)duds\\
  & \leq C(\lambda, \beta, p, t) \int_0^t\int_s^t(u-s)^{\alpha p-p}duds,
  \end{align*}
  which is finite for $1+\alpha p - p > 0$ and $1+(\beta - 1)p >0$.
 \end{enumerate}
 Summarising, the following conditions on the parameters are sufficient for the Gamma-Volterra process \eqref{eq:keyex} to be a $(p,\alpha)$-integrator:
 \begin{itemize}
 \item
 $\beta \geq 1, 1+(\alpha-1)p>0$
 \item
 $\beta\in (0,1), 1+(\alpha-1)p>0, 1 + (\beta-1)p>0$
 \item
 $\beta \in (-1,0), 2+(\alpha+\beta-2)p >0$.
 \end{itemize}
\end{example}

%%%%%%%%%%%%%%%%%%%%
%%%%%%%%%%%%%%%%%%%
\subsection{Approximation of integrals with Volterra drivers}

We are now ready to study the approximation of integrals with respect to Volterra processes by integrals driven by semimartingales. Further on we will consider again the example of the Gamma-Volterra process in \eqref{eq:keyex}.

\begin{theorem}\label{prop:approx2}
Let \( L=L(t),\;t\geq0 \) be a L\'evy process with the characteristic triplet \( (a,b,\nu) \), for \( a\in\R,\;b\geq0 \), and \( \nu \) a symmetric measure such that $\int_\R|z|^p\nu(dz)<\infty$ for some $p$ with $p\geq1$ if $b=0$ or $p\geq2$ if $b>0$. Let the kernel functions $g=g(t-\cdot)$ and $g^\e=g^\e(t-\cdot)$ belong to $L_p[0,t]$ for any $t\in[0,T]$.
Assume that $g(t^--\cdot)$ and $g^\e(t^--\cdot)$ are well-defined and in $L_p([0,t])$ and assume that $g, g^\e$ also satisfy the set of conditions \( (D_p) \) for some \( \alpha\in(0,1) \) together with the following:

 \noindent\textbf{Assumptions ($C_p$)}. For $\e\to 0$,
 \begin{align*}
  (i) &\int_0^T\int_s^T\frac{|g^\e(T^--v)-g(T^--v)|^p}{(T-s)^{p-\alpha p}}dvds\rightarrow0\\
  (ii) &\int_0^T\int_0^s\frac{|(g^\e(T^--v)-g(T^- -v))-(g^\e(s-v)-g(s-v))|^p}{(T-s)^{p-\alpha p}}dvds\rightarrow0\\
  (iii) &\int_0^T\int_s^T\int_s^u\frac{|g^\e(u-v)-g(u-v)|^p}{(u-s)^{2p-\alpha p}}dvduds\rightarrow0\\
  (iv) &\int_0^T\int_s^T\int_0^s\frac{|(g^\e(u-v)-g(u-v))-(g^\e(s-v)-g(s-v))|^p}{(u-s)^{2p-\alpha p}}dvduds\rightarrow0
 \end{align*}
Define the Volterra processes
 \[
 Y:=Y(s)=\int_0^tg(s-v)dL(v),\qquad s\in[0,T],
 \]
 and
 \[
 Y^\e:=Y^\e(s)=\int_0^tg^\e(s-v)dL(v),\quad s\in[0,T].
 \]
 Then, for any stochastic process $X\in\E\mathcal{D}^+_{q}(\alpha,T)$ where $p^{-1}+q^{-1}=1$, we have the convergence
 \[
  \int_0^T X(s)dY^\e(s)\longrightarrow\int_0^T X(s)dY(s),\quad\text{ as }\e\rightarrow0,
 \]
  in \( L_1(\Omega) \) of the generalised Lebesgue-Stieltjes integrals.
\end{theorem}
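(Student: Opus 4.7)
The plan is to use Definition \ref{stoGLS} to write
$$\int_0^T X(s)dY^\e(s) - \int_0^T X(s)dY(s) = \int_0^T (\mathcal{D}^\alpha_{0^+}X)(s)\, \mathcal{D}^{1-\alpha}_{T^-}\bigl((Y^\e-Y)_{T^-}\bigr)(s)\,ds,$$
and then to apply H\"older's inequality twice (pointwise in $\omega$ and subsequently for the $ds$-integral) to bound the $L_1(\Omega)$-norm of the left-hand side by the product of
$$\Bigl(\int_0^T\E|(\mathcal{D}^\alpha_{0^+}X)(s)|^q ds\Bigr)^{1/q}\cdot \Bigl(\int_0^T\E|\mathcal{D}^{1-\alpha}_{T^-}((Y^\e-Y)_{T^-})(s)|^p ds\Bigr)^{1/p}.$$
The first factor is finite by the assumption $X\in\E\mathcal{D}^+_q(\alpha,T)$, so the whole problem reduces to showing that the second factor tends to zero as $\e\to 0$.

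For this I would expand $\mathcal{D}^{1-\alpha}_{T^-}((Y^\e-Y)_{T^-})(s)$ via its Weyl representation into a boundary term proportional to $[\delta^\e(T^-)-\delta^\e(s)]/(T-s)^{1-\alpha}$ and an integral term $\int_s^T [\delta^\e(u)-\delta^\e(s)]/(u-s)^{2-\alpha}du$, where $\delta^\e := Y^\e - Y$ is itself a L\'evy driven Volterra process with kernel $h^\e := g^\e - g$. For $0<s<u\leq T$ (the case $u=T^-$ being admissible thanks to Lemma \ref{lemma:leftlimit}), one has the representation
$$\delta^\e(u)-\delta^\e(s) = \int_0^s \bigl[h^\e(u-v)-h^\e(s-v)\bigr]dL(v) + \int_s^u h^\e(u-v)\,dL(v),$$
so Theorem \ref{thm:dinunno} applied to $\delta^\e(u)-\delta^\e(s)$ yields
$$\E|\delta^\e(u)-\delta^\e(s)|^p \leq C\Bigl[\int_0^s|h^\e(u-v)-h^\e(s-v)|^p dv + \int_s^u|h^\e(u-v)|^p dv\Bigr],$$
with $C$ depending only on $a,b,\nu,T$; the $|a|^p$ and, when $b>0$ (and hence $p\geq 2$), $b^{p/2}$ contributions of Theorem \ref{thm:dinunno} are absorbed via the H\"older embeddings $L_p[0,T]\hookrightarrow L_1[0,T]$ and $L_p\hookrightarrow L_2$.

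Substituting $u=T^-$ into the boundary contribution to $\int_0^T\E|\mathcal{D}^{1-\alpha}_{T^-}(\cdot)(s)|^p ds$ and swapping the orders of integration in $(s,v)$ recovers exactly the quantities in $(C_p)(i)$ and $(C_p)(ii)$. For the integral contribution I would apply the pointwise H\"older bound $|\int_s^T f(u)du|^p \leq (T-s)^{p-1}\int_s^T|f(u)|^p du$ with $f(u)=[\delta^\e(u)-\delta^\e(s)]/(u-s)^{2-\alpha}$, take expectation and substitute the above moment estimate, and finally interchange orders of integration to reproduce exactly the integrands of $(C_p)(iii)$ and $(C_p)(iv)$ up to the harmless prefactor $T^{p-1}$. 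Since the four quantities in $(C_p)$ all vanish as $\e\to 0$, the conclusion follows.

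The delicate point is the integral term in the Weyl representation: for $\alpha\in(0,1)$ the weight $(u-s)^{\alpha-2}$ is not integrable on $(s,T)$, so one cannot pull $|\cdot|^p$ inside via Minkowski's integral inequality or a probability-measure Jensen argument. The H\"older inequality $|\int f|^p \leq (T-s)^{p-1}\int |f|^p$ is the key device: its polynomial prefactor is harmless on a bounded horizon, and the post-H\"older exponent $(u-s)^{(2-\alpha)p}$ matches precisely the weights built into $(C_p)(iii)$--$(C_p)(iv)$, which were designed to mirror the conditions $(D_p)$ of Theorem \ref{thm:dinunno1} with $g$ replaced by $h^\e$.
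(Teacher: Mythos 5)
Your proposal is correct and follows essentially the same route as the paper's proof: H\"older applied twice to reduce everything to $\int_0^T\E\big|\big(\mathcal{D}^{1-\alpha}_{T^-}(Y^\e-Y)_{T^-}\big)(s)\big|^p\,ds\to 0$, the Weyl representation of the fractional derivative, the two-piece decomposition of the increments of $\bar Y^\e=Y^\e-Y$ with kernel $\bar g^\e=g^\e-g$, the moment estimates of Theorem \ref{thm:dinunno}, and the matching of the resulting iterated integrals with $(C_p)(i)$--$(iv)$; your explicit bound $\big|\int_s^T f\big|^p\le (T-s)^{p-1}\int_s^T|f|^p$ is precisely the step the paper leaves implicit in passing to its displays \eqref{eq:*3}--\eqref{eq:*4}. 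The only ingredient the paper adds is a closing verification, via dominated convergence, that $(Y^\e-Y)_{T^-}\in\mathcal{I}^{1-\alpha}_{T^-}(L_p)$ $\prob$-$a.s.$ so that the Weyl representation is legitimate; in your write-up this is left implicit, but it follows from the standing $(D_p)$ assumptions imposed on both $g$ and $g^\e$.
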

\begin{proof}
In the given setting for the processes $Y,Y^\e$ and $X$, the generalised Lebesgue-Stieltjes integrals are well-defined $\prob$-$a.s.$ and in $L_1(\Omega)$. See also Lemma \ref{lemma:leftlimit} for the definition of $g(t^--\cdot)$,$g^{\epsilon}(t^--\cdot)$ and $Y(t^-)$,$Y^{\epsilon}(t^-)$.
By linearity of the operators involved and the use of the H\"older inequality, we estimate the $L_1(\Omega)$ difference of the integrals as follows:
 \begin{align}\label{eq:L1dist}
  &\E\Bigg|\int_0^T X(s)dY^\e(s)-\int_0^T X(s)dY(s)\Bigg|\nonumber\\
  &\leq\int_0^T\|(\mathcal{D}^\alpha_{0^+}X)(s)\|_{L_q(\Omega)}\|(\mathcal{D}^{1-\alpha}_{T^-}(Y^\e-Y)_{T^-})(s)\|_{L_p(\Omega)}ds \\
  &\leq\left(\int_0^T\|(\mathcal{D}^\alpha_{0^+}X)(s)\|^q_{L_q(\Omega)}ds\right)^\frac{1}{q}
  \left(\int_0^T\|(\mathcal{D}^{1-\alpha}_{T^-}(Y^\e-Y)_{T^-})(s)\|^p_{L_p(\Omega)}ds\right)^\frac{1}{p}.\nonumber
 \end{align}
 Hence the statement is proved if
 \begin{equation}
 \label{eq:*1}
 \int_0^T \E \big[\big\vert (\mathcal{D}^{1-\alpha}_{T^-}(Y^\e-Y)_{T^-})(s)\big\vert^p \big]ds \longrightarrow 0, \quad \e \downarrow 0.
 \end{equation}
Define $\bar g^\e:= g^\e - g$ and 
$$
\bar Y^\e := Y^\e - Y = \int_0^T \bar g^\e (s-v) dL(v), \quad s\geq 0.
$$ 
From $\bar Y^\e_{T^-} (s) := \big( \bar Y^\e (T^-) - \bar Y^\e (s) \big) 1_{(0,T)}(s)$,
we can see that, if $\bar Y^\e_{T^-} \in \mathcal{I}^{1-\alpha}_{T^-} (L_p)$, the fractional derivative is well-defined and admits representation
 \begin{align}
 \label{eq:*2}
 (\mathcal{D}^{1-\alpha}_{T^-}\bar Y^\e_{T^-}) (s) 
 =& 
 \frac{(-1)^{1-\alpha}}{\Gamma(\alpha)}
 \Big[\frac{\bar Y^\e(T^-) - \bar Y^\e(s)}{(T-s)^{1-\alpha}}\\
 &+ (1-\alpha) \int_s^T\frac{\bar Y^\e(s)-\bar Y^\e(u)}{(u-s)^{2-\alpha}}du \Big]\mathbf{1}_{(0,T)}(s), \nonumber
  \end{align}
 which can then be substituted into \eqref{eq:*1}.
 Observe that, for $y\leq x$, we have
 \begin{align*}
 \bar Y^\e(x) - \bar Y^\e(y)
=& \int_y^x \bar g^\e (x-v) dL(v)
+ \int_0^y [ \bar g^\e (x-v) - \bar g^\e (y-v)] dL(v).
 \end{align*}
 Hence, from the moment estimates of Theorem \ref{thm:dinunno}, we obtain
 \begin{align}
 \label{eq:*3}
 \int_0^T \int_s^T & \frac{\E[\vert  \bar Y^\e(s) - \bar Y^\e(u) \vert^p] }{(u-s)^{(2-\alpha)p}} \,duds 
 \leq C_1 \Big[ \int_0^T \int_s^T \frac{\Vert \bar g^\e (u-\cdot) \Vert_{L_p(s,u]}^p}{(u-s)^{(2-\alpha)p} } \, duds \nonumber \\
 &+  \int_0^T \int_s^T \frac{ \Vert \bar g^\e (u-\cdot)- \bar g^\e (s-\cdot) \Vert_{L_p(0,s]}^p}{(u-s)^{(2-\alpha)p} } \, duds \Big]
 \end{align}
 which vanishes as $\e \downarrow 0$, thanks to $(D_p)(iii)$-$(iv)$ and $(C_p)(iii)$-$(iv)$. 
 Similarly, we can see that $(D_p)(i)$-$(ii)$ and $(C_p)(i)$-$(ii)$ ensure the convergence 
  \begin{align}
 \label{eq:*4}
 \int_0^T  & \frac{\E[\vert  \bar Y^\e(T^-) - \bar Y^\e(s) \vert^p] }{(T-s)^{(1-\alpha)p}} \,ds 
 \leq C_2 \Big[ \int_0^T \frac{\Vert \bar g^\e (T^--\cdot) \Vert_{L_p(s,T]}^p}{(T-s)^{(1-\alpha)p} } \, ds \nonumber \\
 &+  \int_0^T \frac{ \Vert \bar g^\e (T^--\cdot)- \bar g^\e (s-\cdot) \Vert_{L_p(0,s]}^p}{(T-s)^{(1-\alpha)p} } \, ds \Big]
 \longrightarrow 0, \quad \e \downarrow 0.
 \end{align}
 Naturally, \eqref{eq:*3}-\eqref{eq:*4} guarantee \eqref{eq:*1}.
 To conclude the proof, we verify that $\bar Y^\e_{T^-} \in \mathcal{I}^{1-\alpha}_{T^-}(L_p)$.
 For $p \geq 1$, this is ensured when $\bar Y^\e_{T^-} \in L_p([0,T])$ $\prob$-$a.s.$ and the processes
 $$
 A_\delta(s) := \int_{s+\delta}^T \frac{\bar Y^\e (s) - \bar Y^\e(u)}{(u-s)^{(2-\alpha)}} du, \quad s \in [0,T],
 $$
 converges in $L_p([0,T])$, $\prob$-$a.s.$ for $\delta\downarrow 0$. We verify these two requirements. 
First we see that we have that $\bar Y^\e_{T^-} \in L_p(\Omega\times [0,T])$ from the estimates of Theorem \ref{thm:dinunno}.
By dominated convergence we can see that $A_\delta$ converges to $A_0$ in $L_p([0,T])$.In fact, for all $\delta$ small, we have 
\[
 \begin{split}
 \vert A_\delta(s) - A_0(s) \vert^p 
 &= \Big\vert \int_s^{s+\delta} \frac{\bar Y^\e (s) - \bar Y^\e(u)}{(u-s)^{(2-\alpha)}} du \Big\vert^p\\
  &\leq \int_s^T \frac{|\bar Y^\e (s) - \bar Y^\e(u)|^p}{(u-s)^{(2-\alpha)p}} du =: B(s)
 \end{split}
 \]
$\prob$-$a.s.$. and the bound $B \in L_p(\Omega\times [0,T])$ (see \eqref{eq:*3}).
By this the proof is complete. 
\end{proof}

\vspace{2mm}
\begin{example}\label{ex:general}
 \textbf{Approximation of the integrals with respect to Gamma-Volterra processes.}
 For illustration, we consider the process \eqref{eq:keyex} with $\lambda=0$, in the kernel function, that is 
 $g(t-s)=(t-s)^\beta$. To treat the case $\lambda >0$, we need similar inequalities as in Example \ref{ex:appr}.

As before, we consider a L\'evy driving noise with characteristic triplet $(0,0,\nu)$ with $\nu$ symmetric and such that
$\int_\mathbb{R} \vert z \vert^p \nu(dz) <\infty $ for some $p$.
The parameters $\beta, p , \alpha$ satisfy the conditions of Example \ref{ex:Y} and Example \ref{ex:appr}.
These guarantee the validity of assumptions $(D_p)$.
 We now go through the requirements of $(C_p)$ with $g^\e(t-s) := (t-s+\e)^\beta$, $s\in [0,t]$.
 \begin{enumerate}[(i)]
  \item 
  Using the same approach and inequalities as in Example~\ref{ex:Y}, we split the inner integral. Again we separate the cases depending on $\beta$. Taking $\beta<0$ we obtain
  \begin{align*}
   \int_s^T|g^\e(T^--v)-g(T^-- & v)|^pdv\leq \int_s^{T-\e}\e^p|\beta|^p(T-v)^{(\beta -1)p}dv\\
   &\hspace{1cm} + \int_{T-\e}^T (T-v)^{\beta p}dv\\
   &=\frac{|\beta|^p\e^p}{1+(\beta-1)p}((T-s)^{1+(\beta-1)p} - \e^{1+(\beta-1)p})\\
   &\hspace{1cm} + \frac{\e^{1+\beta p}}{1+\beta p}.
  \end{align*}
  The next layer of the integrals yields:
  \begin{align*}
   \int_0^T(T-s)^{\alpha p-p}&\int_s^T|g^\e(T-v)-g(T-v)|^pdvds\\
   &\leq \int_0^T\frac{|\beta|^p\e^p}{1+(\beta-1)p}(T-s)^{1+(\alpha + \beta-2)p}\\
   &\hspace{0.5cm} +\Bigl(\frac{1}{1+\beta p} - \frac{|\beta|^p}{1+(\beta-1) p}\Bigr) \e^{1+\beta p}(T-s)^{\alpha p - p}ds,
  \end{align*}
  which converges to zero if $2+(\alpha+\beta-2)p>0$.\\
  The same estimate can be applied for $0<\beta<1$. 
  In this case we find the same condition as above. 
  As far as the case $\beta>1$ is concerned, similar reasonings as in Example~\ref{ex:Y} can be done, leading to convergence if $1+ (\alpha -1)p  > 0$.
  These conditions are the same as in Example \ref{ex:appr}.
  \item 
  Consider $\beta <0$.
  Observe that $   \bar g^\e(x):=(x+\e)^\beta-x^\beta$ is negative and increasing, hence
  $$
  \vert \bar g^\e(T-v) - \bar g^\e(s-v) \vert = \vert \bar g^\e (s-v) \vert.
  $$
  Then we have
  \begin{align*}
  &\int_0^T\int_0^s\frac{|(g^\e(T^--v)-g(T^--v))-(g^\e(s-v)-g(s-v))|^p}{(T-s)^{p-\alpha p}}dvds\\
   &\leq\int_0^T(T-s)^{\alpha p -p}\int_0^s\left|(s+\e-v)^{\beta}-(s-v)^{\beta}\right|^pdvds\\
   &\leq\int_0^T(T-s)^{\alpha p -p}\Bigg(\int_0^{s-\e}|\beta|^p\e^p(s-v)^{(\beta-1)p}dv +\int_{s-\e}^s(s-v)^{\beta p}dv\Bigg)ds\\
   &\leq\int_0^T(T-s)^{\alpha p -p}\Bigg(\frac{|\beta|^p\e^p}{1+(\beta-1)p}(s^{1+(\beta-1)p}-\e^{1+(\beta-1)p})\\
   &\hspace{3.5cm} +\frac{1}{1+\beta p}\e^{1+\beta p}\Bigg)ds,
  \end{align*}
  %\begin{align*}
   %\int_0^T&\int_0^s\frac{|(g^\e(T-v)-g(T-v))-(g^\e(s-v)-g(s-v))|^p}{(T-s)^{p-\alpha p}}dvds\\
   %&\leq\beta^p\int_0^T(T-s)^{\alpha p}\int_0^s\left|(s+\e-v)^{\beta-1}-(s-v)^{\beta-1}\right|^pdvds\\
   %&\leq\beta^p\int_0^T(T-s)^{\alpha p}\Bigg((\beta-1)^p\e^p\int_0^{s-\e}(s-v)^{(\beta-2)p}dv\\
   %&\quad+\int_{s-\e}^s(s-v)^{(\beta-1)p}dv\Bigg)ds\\
   %&\leq \beta^p(\beta-1)^p\int_0^T(T-s)^{\alpha p}\Bigg(\frac{\e^p}{1+(\beta-2)p}[s^{1+(\beta-2)p}-\e^{1+(\beta-2)p}]\\
   %&\quad+\frac{1}{1+(\beta-1)p}\e^{1+(\beta-1)p}\Bigg)ds\\
   %&\leq C\left(\e^p\int_0^T(T-s)^{\alpha p}s^{1+(\beta-2)p}ds+\e^{1+(\beta-1)p}\int_0^T(T-s)^{\alpha p}ds\right).
  %\end{align*}
  where we have applied the same argument as in Example \ref{ex:Y} based on \eqref{N-1} and \eqref{*}.
  Here we require that $2+(\beta-1)p>0$ and $1+\alpha p -p>0$.\\
  With similar arguments, in the case $\beta>0$ it can be shown that the required conditions are $1+\alpha p - p > 0$, if $\beta >1$, and $1+(\beta-1)p >0$, $1+(\alpha -1)p >0$, if $\beta \in (0,1)$.
  These conditions are already guaranteed by those in Example \ref{ex:appr}.
  \item 
 Let $\beta <0$.
 For $\e$ small we split the integrals and apply \eqref{N-1}, \eqref{*}, then we obtain
  \begin{align*}
  \int_0^T&\int_s^T(u-s)^{\alpha p-2p}\int_s^u|g^\e(u-v)-g(u-v)|^pdvduds\\
  &\leq \int_0^T\int_s^{s+\e}(u-s)^{\alpha p-2p}\int_s^u(u-v)^{\beta p}dvduds\\
  & \quad + \int_0^T\int_{s+\e}^T(u-s)^{\alpha p-2p}\Bigg(\int_s^{u-\e}\e^p|\beta|^p(u-v)^{(\beta -1)p}dv\Bigg.\\
  &\hspace{4.5cm} \Bigg.+ \int_{u-\e}^u (u-v)^{\beta p}dv\Bigg)duds\\
  &= \int_0^T\int_s^{s+\e}\frac{(u-s)^{1+\beta p +\alpha p-2p}}{1+\beta p}duds\\
  & \quad + \int_0^T\int_{s+\e}^T(u-s)^{\alpha p-2p}\Bigg(\frac{\e^p|\beta|^p}{1+(\beta -1)p}(u-s)^{1+(\beta -1)p}\Bigg.\\
  &\hspace{3.5cm} + \Bigg.\Bigl( \frac{1}{1+\beta p} - \frac{|\beta|^p}{1+(\beta -1)p}\Bigr)\frac{\e^{1+\beta p}}{1+\beta p}\Bigg)duds\\
  &= \int_0^T \Bigl(C_1(\beta,\alpha,p)\e^{2+\beta p +\alpha p-2p}\Bigr.\\
  & \quad + C_2(\beta,\alpha,p)\e^p((T-s)^{2+\beta p + \alpha p -3p} - \e^{2+\beta p + \alpha p -3p})\\
  & \quad + \Bigl. C_3(\beta,\alpha,p)\e^{1+\beta p}((T-s)^{1+\alpha p -2p} - \e^{1+\alpha p -2p})\Bigr)ds.
  \end{align*}
  Taking into account the study in Example \ref{ex:appr}, we see that the convergence of the expression above is given when $3+(\alpha+\beta-3)p>0$.\\
 Let $\beta>0$. By application of the monotonicity of $\bar g^\e = g^\e-g$ and \eqref{N-1}, we can see that the convergence is obtained when $2+\alpha p - 2 p > 0$.
  %For $\beta>1$ we have
  %\begin{align*}
   %\int_0^T\int_s^T(u-s)^{\alpha p-2p}&\int_s^u|g^\e(u-v)-g(u-v)|^pdvduds\\
   %&\leq |\beta|^p\e^p\int_0^T\int_s^T(u-s)^{1+(\alpha+\beta-3)p}duds.
  %\end{align*}
  %Here we require that $2+(\alpha+\beta-3)p>0$.\\
  %Similarly, for $0<\beta<1$ we obtain the condition $ 1 + \alpha p - 2p > 0$.
  \item 
  Let $\beta <0$. We split the integration range in disjoint intervals and study them separately, in a similar fashion as in Example \ref{ex:appr}:
  \begin{align*}
  \int_0^T&\int_s^T\int_0^s\frac{|(g^\e(u-v)-g(u-v))-(g^\e(s-v)-g(s-v))|^p}{(u-s)^{2p-\alpha p}}dvduds\\
  &\leq\int_0^{\e} \Bigg(\int_s^{2s}(u-s)^{\alpha p-2p}\int_0^s\left|(u-v)^{\beta}-(s-v)^{\beta}\right|^pdvdu \Bigg.\\
  &\hspace{1cm} \Bigg. +\int_{2s}^T(u-s)^{\alpha p-2p}\int_0^s\left|(u-v)^{\beta}-(s-v)^{\beta}\right|^pdvdu \Bigg)ds\\
  &\quad +\int_{\e}^T\Bigg(\int_s^{s+\e}(u-s)^{\alpha p-2p}\int_0^s\left|(u-v)^{\beta}-(s-v)^{\beta}\right|^pdvdu \Bigg.\\
  &\hspace{1.5cm} \Bigg. +\int_{s+\e}^T(u-s)^{\alpha p-2p}\int_0^s\left|(s+\e-v)^{\beta}-(s-v)^{\beta}\right|^pdvdu \Bigg)ds\\
  &=: A_1 + A_2 + B_1 + B_2.
  \end{align*}
  By application of \eqref{N-1}, \eqref{*} we obtain
  \begin{align*}
  A_1 &= \int_0^{\e}\int_s^{2s}(u-s)^{\alpha p-2p}\int_0^s\left|(u-v)^{\beta}-(s-v)^{\beta}\right|^pdvduds\\
  &\leq \int_0^{\e}\int_s^{2s}(u-s)^{\alpha p-2p}\Bigg(\int_0^{2s-u}|\beta|^p(u-s)^p(s-v)^{(\beta-1)p}dv\Bigg. \\
  &\hspace{4cm} \Bigg. + \int_{2s-u}^s (s-v)^{\beta p}dv\Bigg)duds\\
  &\leq \int_0^{\e}\int_s^{2s}\Bigg(\frac{|\beta|^p}{1+(\beta-1)p}s^{1+(\beta-1)p}(u-s)^{\alpha p-p}\Bigg. \\
  &\hspace{2cm} \Bigg. + \Bigl(\frac{1}{1+\beta p} - \frac{|\beta|^p}{1+(\beta-1)p}\Bigr) (u-s)^{1+\beta p +\alpha p -2p}\Bigg)duds\\
  &= \int_0^{\e} C(\beta,\alpha,p)s^{2+\beta p +\alpha p -2p} ds,
  \end{align*}
  which converges to $0$ if we require $2+(\alpha+\beta-2)p>0$. 
  The next summand is given by
  \begin{align*}
  A_2 &= \int_0^{\e}\int_{2s}^T(u-s)^{\alpha p-2p}\int_0^s\left|(u-v)^{\beta}-(s-v)^{\beta}\right|^pdvduds\\
  &\leq \int_0^{\e}\int_{2s}^T(u-s)^{\alpha p-2p}\int_0^s(s-v)^{\beta p}dvduds \\
  &= \int_0^{\e}\frac{s^{1+\beta p}}{1+\beta p}\frac{1}{1+\alpha p -2p}\Bigl((T-s)^{1+\alpha p -2p} - s^{1+\alpha p -2p}\Bigr)ds,
  \end{align*}
where the last integral converges if $3+(\alpha+\beta-2)p>0$. 
We proceed to the next summand:
  \begin{align*}
  B_1 &= \int_{\e}^T\int_s^{s+\e}(u-s)^{\alpha p-2p}\int_0^s\left|(u-v)^{\beta}-(s-v)^{\beta}\right|^pdvduds\\
  &\leq \int_{\e}^T\int_s^{s+\e}(u-s)^{\alpha p-2p}\Bigg(\int_0^{2s-u}|\beta|^p(u-s)^p(s-v)^{(\beta-1)p}dv\Bigg. \\
  &\hspace{4cm} \Bigg. + \int_{2s-u}^s (s-v)^{\beta p}dv\Bigg)duds\\
  &\leq \int_{\e}^T\int_s^{s+\e}\Bigg(\frac{|\beta|^p}{1+(\beta-1)p}s^{1+(\beta-1)p}(u-s)^{\alpha p-p}\Bigg. \\
  &\hspace{2cm} \Bigg. + \Bigl(\frac{1}{1+\beta p} - \frac{|\beta|^p}{1+(\beta-1)p}\Bigr) (u-s)^{1+\beta p +\alpha p -2p}\Bigg)duds\\
  &= \int_{\e}^T \Bigl(C_1(\beta,\alpha,p)\e^{1+\alpha p -p}s^{1+(\beta-1)p} + C_2(\beta,\alpha,p) \e^{2+\beta p +\alpha p -2p}\Bigr) ds.
  \end{align*}
Its convergence is given if $2+(\alpha+\beta-2)p>0$. Then considering the last piece we get
  \begin{align*}
  B_2 &= \int_{\e}^T\int_{s+\e}^T(u-s)^{\alpha p-2p}\int_0^s\left|(s+\e-v)^{\beta}-(s-v)^{\beta}\right|^pdvduds\\
  &\leq \int_{\e}^T\int_{s+\e}^T(u-s)^{\alpha p-2p}\Bigg(\int_0^{s-\e}|\beta|^p\e^p(s-v)^{(\beta-1)p}dv\Bigg. \\
  &\hspace{4cm} \Bigg. + \int_{s-\e}^s (s-v)^{\beta p}dv\Bigg)duds\\
  &\leq \int_{\e}^T\int_{s+\e}^T(u-s)^{\alpha p-2p}\Bigg(\frac{|\beta|^p\e^p}{1+(\beta-1)p}s^{1+(\beta-1)p}\Bigg. \\
  &\hspace{2cm} \Bigg. + \Bigl(\frac{1}{1+\beta p} - \frac{|\beta|^p}{1+(\beta-1)p}\Bigr)\e^{1+\beta p}\Bigg)duds\\
  &= \int_{\e}^T \Bigg(\frac{|\beta|^p\e^p}{1+(\beta-1)p}s^{1+(\beta-1)p} + \Bigl(\frac{1}{1+\beta p} - \frac{|\beta|^p}{1+(\beta-1)p}\Bigr)\e^{1+\beta p}\Bigg) \\
  &\hspace{2cm} \frac{1}{1+\alpha p -2p}\Bigl((T-s)^{1+\alpha p -2p} - \e^{1+\alpha p -2p}\Bigr)ds.
  \end{align*}
If $2+(\alpha+\beta-2)p>0$, we have convergence of the last integral.\\
  For $\beta$ positive, with similar reasoning as above, we find that convergence is guaranteed when $2+(\alpha-2)p>0$, $1+(\beta-1)p>0$, when $0<\beta<1$, and $1+\alpha p -p>0$, when $\beta>1$.
  %The calculations here are very similar to the calculations in (ii).
  %\begin{align*}
   %\int_0^T&\int_s^T\int_0^s\frac{|(g^\e(u-v)-g(u-v))-(g^\e(s-v)-g(s-v))|^p}{(u-s)^{2p-\alpha p}}dvduds\\
   %&\leq|\beta|^p\int_0^T\int_s^T(u-s)^{\alpha p-p}\int_0^s\left|(s+\e-v)^{\beta-1}-(s-v)^{\beta-1}\right|^pdvduds\\
   %&\leq \frac{\beta^p(\beta-1)^p}{1+(\beta-2)p}\e^p\int_0^Ts^{1+(\beta-2)p}\int_s^T(u-s)^{\alpha p-p}duds\\
   %&\leq C\e^p\int_0^Ts^{1+(\beta-2)p}(T-s)^{1+\alpha p-p}ds.
  %\end{align*}
  %Here we require that $1+\alpha p-p>0$ and $2+(\beta-2)p>0$.
 \end{enumerate}
 Summarising, for the the driving L\'evy process $L$ with characteristic triplet $(0,0,\nu)$ and symmetric measure $\nu$ such that $\int_\mathbb{R} \vert z \vert^p \nu(dz) < \infty$, we have considered the integrals $\int_0^T X(t) dY^\e(t)$ with $Y^\e(t) = \int_0^T (t-s+\e)^\beta dL(s) $ and the integral $\int_0^T X(t) dY(t)$ with  $Y^(t) = \int_0^T (t-s)^\beta dL(s) $, see Definition \ref{stoGLS}. Then we have shown that there is convergence of the integrals in $L_1(\Omega)$, according to Theorem \ref{prop:approx2}, if one of the following conditions is satisfied:
 \begin{itemize}
 \item 
 $\beta>1, \alpha \in (0,1), p \geq 1, 2+(\alpha-2)p >0$
 \item
  $\beta \in (0,1), \alpha \in (0,1), p \geq 1, 2+(\alpha-2)p >0, 1+ (\beta-1)p>0$
 \item
   $\beta \in (-1,0), \alpha \in (0,1), p \geq 1, 3+(\alpha+\beta-3)p >0$.
 \end{itemize}
\end{example}

\vspace{4mm}
We conclude this section with a numerical example of a Gamma-Volterra process as integrator driven by a pure jump L\'evy process with infinite activity. The parameters are taken according to the sufficient conditions found in Example \ref{ex:appr} and Example \ref{ex:general}. We illustrate the use of the approximation result for simulation purposes. Approximating the non-semimartingale by the corresponding semimartingale as per Theorem \ref{prop:approx2}, we can then exploit the connection of the fractional integral with the It\^o integral that we have remarked earlier.

\vspace{2mm}
\begin{example}\label{ex:complete}
 \textbf{Numerical example of the approximation of an integral with Volterra driver.} In light of Example \ref{ex:general}, for illustration, set the parameters in \eqref{eq:keyex} to be $\lambda=0,\;\beta=-1/16$ and $p=9/8$, i.e.
 \[
  Y(t)=\int_0^t(t-s)^{-1/16}\;dL(s),
 \]
 and let $L$ be a symmetric tempered stable L\'evy process with L\'evy measure $ \nu(dz) = C\frac{e^{-\gamma |z|}}{|z|^{1+\alpha_L}}dz $ ($\alpha_L<2, \gamma>0$). As illustration, choose $\gamma=10$, $\alpha_L=1/2$.
 Then we see that the $p$-th moment of the L\'evy measure is finite
 \[
  \int_{\mathbb{R}} |z|^p\nu(dz) = 2C \int_0^{\infty} z^{(p-\alpha_L)-1} e^{-\gamma z}dz<\infty,
 \]
since we can obtain the Gamma function by a change of variable and also $p-\alpha_L>0$.

From Example \ref{coro:semi} we know that $Y$ is not a semimartingale. From Example \ref{ex:appr}, taking $\alpha=2/5$, $q=9$, we know that $Y$ is an appropriate $(p,\alpha)-$integrator for any $X\in\E\mathcal{D}^+_q(\alpha,T)$, and that \( g \) and \( g^\varepsilon \) satisfy the convergence conditions of Theorem \ref{prop:approx2}, respectively. The fact that \( Y^\varepsilon \) is a semimartingale is deduced from \eqref{eq:semicond} in Theorem \ref{thm:basse}: For all $\e>0$,
 \begin{align*}
  \int_0^t&\int_{[-1,1]}|z(g^\e)'(s)|^2\wedge|z(g^\e)'(s)|\nu(dz)ds\\
  &=|\beta|\int_0^t\int_{[-1,1]}|z(s+\e)^{\beta-1}|^2\wedge|z(s+\e)^{\beta-1}|\nu(dz)ds\\
  &\leq |\beta|\e^{2(\beta-1)}t\int_{[-1,1]}|z|^2\wedge|z|\nu(dz)\\
  &\leq |\beta|\e^{2(\beta-1)}T\int_{[-1,1]}|z|^2\nu(dz)<\infty.
 \end{align*}
Thus from Example \ref{ex:general}, for these values of the parameters, we have the convergence $\int XdY^\e\longrightarrow \int XdY$ in $L_1(\Omega)$.

\vspace{2mm}
To complete the example, we consider two integrands. First we take $X(t)=t$. We see that $X\in\E\mathcal{D}^+_q(\alpha,T)$, in fact
\begin{align*}
\int_0^T\E|\left(\mathcal{D}^\alpha_{0^+}X\right)(s)|^q ds &= \int_0^T\E \left| \frac{X(s)}{s^\alpha} + \alpha \int_0^s \frac{X(s) - X(u)}{(s-u)^{1+\alpha}} du \right|^q ds\\
&= \int_0^T \left| s^{1-\alpha} + \alpha \frac{s^{1-\alpha}}{1-\alpha} \right|^q ds\\
&= \int_0^T \left(1 + \frac{\alpha}{1-\alpha}\right) s^{q(1-\alpha)} ds < \infty.
\end{align*}
The second integrand is given by $X(t)=B(t)$, where $B(t)$ is a Brownian motion. In this case the fractional derivative is Gaussian and, to show that $X\in\E\mathcal{D}^+_q(\alpha,T)$, we first find the second moment:
\begin{align*}
\E|\left(\mathcal{D}^\alpha_{0^+}B\right)(s)|^2 &= \frac{\E[B^2(s)]}{s^{2\alpha}} + \alpha^2 \! \int_0^s \int_0^s \frac{\E[(B(s) - B(v))(B(s) - B(u))]}{(s-v)^{1+\alpha} (s-u)^{1+\alpha}} dudv\\
& \quad + 2\alpha \int_0^s \frac{\E[B(s)(B(s) - B(v))]}{s^\alpha (s-v)^{1+\alpha}}dv\\
&= I_1 + I_2 + I_3.
\end{align*}
We see that $I_1 = s^{1-2\alpha}$, while
\begin{align*}
I_2 &= \alpha^2 \int_0^s \Bigl( \int_0^v \frac{1}{(s-v)^\alpha (s-u)^{1+\alpha}} du + \int_v^s \frac{1}{(s-v)^{1+\alpha} (s-u)^\alpha} du \Bigr) dv\\
&= \alpha^2 \int_0^s \Bigl( \frac{1}{(s-v)^\alpha}\frac{((s-v)^{-\alpha} - s^{-\alpha})}{\alpha} + \frac{1}{(s-v)^{1+\alpha}}\frac{(s-v)^{1-\alpha}}{1-\alpha} \Bigr) dv\\
&= \alpha^2 \int_0^s \Bigl( \frac{1}{\alpha(1-\alpha)} (s-v)^{-2\alpha} dv - \frac{s^{-\alpha}}{\alpha} (s-v)^{-\alpha} \Bigr) dv\\
&= \frac{2\alpha^2}{(1-\alpha)(1-2\alpha)} s^{1-2\alpha}.
\end{align*}
Similarly $I_3 = \frac{2\alpha}{1-\alpha} s^{1-2\alpha}$. Hence we find that
\begin{equation*}
\int_0^T\E|\left(\mathcal{D}^\alpha_{0^+}B\right)(s)|^q ds = \int_0^T \frac{2^{\frac{q}{2}}\Gamma(\frac{q+1}{2})}{\sqrt{\pi}} C(\alpha)^{\frac{q}{2}}s^{(1-2\alpha)\frac{q}{2}} ds < \infty,
\end{equation*}
and $X\in\E\mathcal{D}^+_q(\alpha,T)$.

\vspace{2mm}
\noindent
Figure \ref{fig:figure1}, Figure \ref{fig:figure2} and Figure \ref{fig:figure3} present a simulation of the processes described above: $L, Y^\epsilon$, and $\int XdY^\epsilon$, with $g^\epsilon(t-s)=(t+\epsilon-s)^{-1/16}$ and $\epsilon=10^{-10}$. For the simulation of a sample path of the tempered stable L\'evy process in Figure \ref{fig:figure1} we used the series representation by Rosi{\'n}ski \cite{rosi}. The simulation of the Volterra process $Y^\epsilon$ is then obtained by means of a classical numerical integration with an Euler scheme, see e.g. \cite{KP}, by using the sample path of $L$.

\begin{figure}
  \centering
    \includegraphics[width=0.8\textwidth]{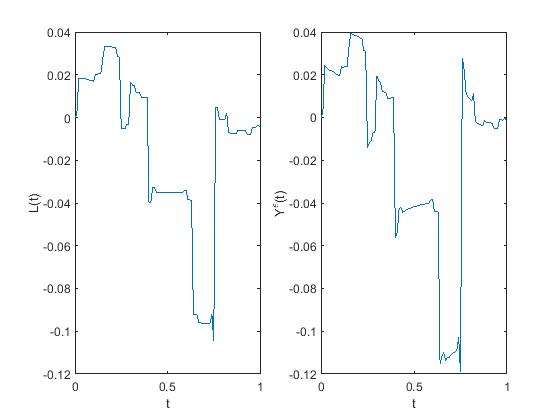}
    \caption{%Simulation of the approximation of the pathwise integral by It\^o integrals. 
    The graph on the left show a simulated path of the tempered stable process $L$, while on the right a path the Gamma-Volterra process $Y$.}
    \label{fig:figure1}
\end{figure}

Using the same approach we also simulated the integrals in the case of $X(t)=t$ in Figure \ref{fig:figure2} and $X(t)=B(t)$ in Figure \ref{fig:figure3}. In particular, since the integrands are clearly adapted and c\`agl\`ad, the integral $ \int X dY^\epsilon $, $ \epsilon>0 $, is well defined as an It\^o integral. This is exploited in our simulation, which is again obtain by numerical integration, with the same value for $\epsilon$. Theorem \ref{prop:approx2} will then guarantee that the simulations convergence in $L_1$ to the pathwise fractional integral $\int X dY$, approximated in this way by It\^o integrals.

\begin{figure}
  \centering
    \includegraphics[width=0.8\textwidth]{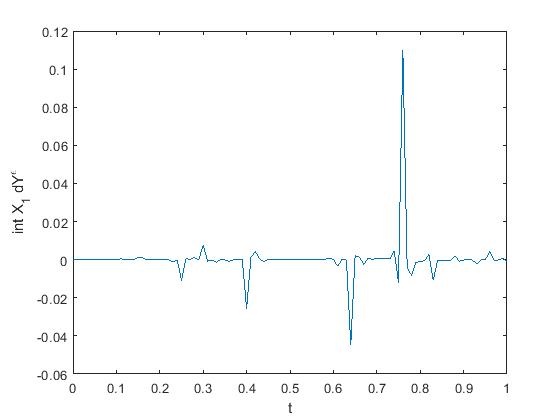}
    \caption{%Simulation of the approximation of the pathwise integral by It\^o integrals. 
    A path the simulated fractional integral with $X_1(t) = t$.}
    \label{fig:figure2}
\end{figure}

\begin{figure}
  \centering
    \includegraphics[width=0.8\textwidth]{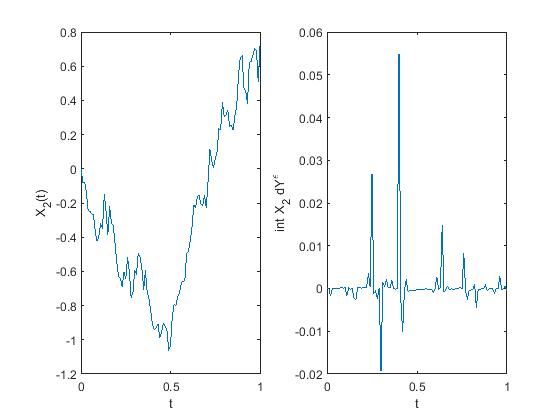}
    \caption{%Simulation of the approximation of the pathwise integral by It\^o integrals. 
    On the left a simulated path of the Brownian motion $B$. On the right the simulated fractional integral with $X_2(t) = B(t)$.}
    \label{fig:figure3}
\end{figure}
\end{example}

\section{Conclusion}
In the framework of fractional integrals, we have studied an approximation of the fractional stochastic integral $\int XdY$ for non-semimartingale integrators $Y$ by a sequence of integrals $(\int XdY^\e)_\e$ with semimartingale integrators $Y^\e$. No filtration structure is needed on the integrand $X$ for the definition of the integral or for the convergence. However, in the case when $X$ is an adapted, c\`agl\`ad process the integral $\int XdY^\e$ agrees with the usual It\^o-integral, thus $\int XdY$ can be approximated by a sequence of It\^o-integrals. 
As illustration we have specialised our results to the case of Gamma-Volterra processes driven by L\'evy processes, which is a family of models largely used in applications and of recent attention in energy finance and biological modelling.
We have shown how the approximation procedures proposed is used for computational purposes by examples.

\vspace{5mm}
\noindent
{\bf Acknowledgement.}
The research leading to these results is within the project STORM: Stochastics for Time-Space Risk Models, receiving funding from the Research Council of Norway (RCN). Project number: 274410.

%A suggestion to further research would be to consider stochastic kernels for the Volterra process.


\begin{thebibliography}{99}
\bibitem{BNBPV}
O. E. Barndorff-Nielsen, F. E. Benth, J. Pedersen, and A. E. D. Veraart. On stochastic integration for volatility modulated L\'evy-driven Volterra processes. Stochastic Process. Appl., 124(1):812–847, 2014.
\bibitem{veraart}
O. E. Barndorff-Nielsen, F. E. Benth, and A. E. D. Veraart. Ambit processes and stochastic partial differential equations. In G. Di Nunno and B. \O ksendal, editors, Advanced mathematical methods for finance, pages 35–74. Springer, Heidelberg, 2011.
\bibitem{veraart2}
O. E. Barndorff-Nielsen, F. E. Benth, and A. E. D. Veraart. Modelling energy spot prices by volatility modulated 
L\'evy-driven Volterra processes. Bernoulli, 19(3):803–845, 2013.
\bibitem{veraart3}
O. E. Barndorff-Nielsen, F. E. Benth, and A. E. D. Veraart. Modelling electricity futures by ambit fields. Adv. in Appl. Probab., 46(3):719–745, 2014.
\bibitem{BN}
O. E. Barndorff-Nielsen and T.N. Thiele Centre for Applied Mathematics in Natural Science. Research Report: Notes on the Gamma Kernel. Number no. 3; no. 2012. 2012.
\bibitem{BNS}
O. E. Barndorff-Nielsen and J. Schmiegel. Spatio-temporal modeling based on L\'evy processes, and its applications to turbulence. Uspekhi Mat. Nauk, 59(1(355)):63–90, 2004.
\bibitem{basse}
A. Basse and J. Pedersen. L\'evy driven moving averages and semi-martingales. Stochastic Process. Appl., 119(9):2970–2991, 2009.
\bibitem{rosinski}
A. Basse-O’Connor and J. Rosi\'nski. On infinitely divisible semimartingales. Probab. Theory Related Fields, 164(1-2):133–163, 2016.
\bibitem{bender}
C. Bender, A. Lindner, and M. Schicks. Finite variation of fractional L\'evy processes. J. Theoret. Probab., 25(2):594–612, 2012.
\bibitem{BM2008}
C. Bender and T. Marquardt. Stochastic calculus for convoluted L\'evy processes. Bernoulli, 14(2):499–518, 2008.
\bibitem{BJ1983}
K. Bichteler and J. Jacod. Random measures and stochastic integration. In Theory and application of random fields (Bangalore, 1982), volume 49 of Lect. Notes Control Inf. Sci., pages 1–18. Springer, Berlin, 1983.
\bibitem{Bouleau-Lepingle-book}
N. Bouleau and D. L\'{e}pingle.
Numerical methods for stochastic processes. John Wiley \& Sons, Inc., New York, 1994
\bibitem{CK2015}
C. Chong and C. Kl\"uppelberg. Integrability conditions for space-time stochastic integrals: theory and applications. Bernoulli, 21(4):2190– 2216, 2015.
\bibitem{dinunno}
G. Di Nunno, Y. Mishura, and K. Ralchenko. Fractional calculus and pathwise integration for Volterra processes driven by L\'evy and martin- gale noise. Fract. Calc. Appl. Anal., 19(6):1356–1392, 2016.
\bibitem{dinunnovives}
G. Di Nunno and J. Vives. A Malliavin-Skorohod calculus in $L_0$ and $L_1$ for additive and Volterra-type processes. Stochastics, 89(1):142–170, 2017.
\bibitem{dirksen}
S. Dirksen.
{It\^{o} isomorphisms for {$L^p$}-valued {P}oisson stochastic integrals},
{Ann. Probab.}, {42 (6)}:  {2595--2643}, 2014.
\bibitem{MR2782221}
N. T. Dung. Semimartingale approximation of fractional Brownian motion and its applications. Comput. Math. Appl., 61(7):1844–1854, 2011.
\bibitem{Jacod-et-al}
J. Jacod, T.G. Kurtz, , S, M\'{e}l\'{e}ard, and P. Protter.{The approximate {E}uler method for {L}\'{e}vy driven stochastic differential equations}, {Ann. Inst. H. Poincar\'{e} Probab. Statist.}, {41 (3)}: {523--558}, 2005.
\bibitem{Jost}
C. Jost. {On the connection between {M}olchan-{G}olosov and {M}andelbrot-{V}an {N}ess representations of fractional {B}rownian motion}, {J. Integral Equations Appl.}, 20 (1): {93--119}, 2008.
\bibitem{KP}
P.E. Kloeden and E. Platen.
{Numerical solution of stochastic differential equations},
{Springer-Verlag, Berlin}, 1992.
\bibitem{knight}
F. B. Knight. Foundations of the prediction process, volume 1 of Oxford Studies in Probability. The Clarendon Press, Oxford University Press, New York, 1992. Oxford Science Publications.
\bibitem{MVN}
B. B. Mandelbrot and J. W. Van Ness. Fractional Brownian motions, fractional noises and applications. SIAM Rev., 10:422--437, 1968.
\bibitem{Marquardt2006}
T. Marquardt. Fractional L\'evy processes with an application to long memory moving average processes. Bernoulli, 12(6):1099--1126, 2006.
\bibitem{Mishura-book}
Y.S. Mishura. Stochastic calculus for fractional Brownian motion and related processes, volume 1929 of Lecture Notes in Mathematics. Springer-Verlag, Berlin, 2008.
\bibitem{Platen-book2}
E. Platen and N. Bruti-Liberati. Numerical solution of stochastic differential equations with jumps in finance. Springer-Verlag, Berlin, 2010.
\bibitem{rajput}
B. S. Rajput and J. Rosi\'nski. Spectral representations of infinitely divisible processes. Probab. Theory Related Fields, 82(3):451--487, 1989.
\bibitem{rosi}
Jan Rosi\'nski. Tempering stable processes. Stochastic Process. Appl., 117(6):677–707, 2007.
\bibitem{russo}
F. Russo and P. Vallois. Elements of stochastic calculus via regularization. In S\'eminaire de Probabilit\'es XL, volume 1899 of Lecture Notes in Math., pages 147–185. Springer, Berlin, 2007.
\bibitem{SKM1993}
S. G. Samko, A. A. Kilbas, and O. I. Marichev. Fractional integrals and derivatives. Gordon and Breach Science Publishers, Yverdon, 1993.
\bibitem{sato}
K. Sato. L\'evy processes and infinitely divisible distributions, volume 68 of Cambridge Studies in Advanced Mathematics. Cambridge University Press, Cambridge, 1999. 
\bibitem{S}
J. Schmiegel. Self-scaling tumor growth. Physica A: Statistical Mechanics and its Applications, 367(C):509–524, 2006.
\bibitem{song-wang}
Y. Song and H. Wang. Discretization error of stochastic iterated integrals. ArXiv, 1704.04894.
\bibitem{thao}
T. H. Thao. A note on fractional Brownian motion. Vietnam J. Math., 31(3):255–260, 2003.
\bibitem{thao2}
T. H. Thao and T. T. Nguyen. Fractal Langevin equation. Vietnam J. Math., 30(1):89–96, 2002.
\bibitem{urbanik}
K. Urbanik and W. A. Woyczy\'nski. A random integral and Orlicz spaces. Bull. Acad. Polon. Sci. Ser. Sci. Math. Astronom. Phys., 15:161–169, 1967.
\bibitem{VK}
T. von K\'arm\'an. Progress in the statistical theory of turbulence. J. Marine Research, 7:252–264, 1948.
\bibitem{Wang}
H. Wang. The Euler scheme for a stochastic differential equation driven by pure jump semimartingales. J. Appl. Probab., 52(1):149--166, 2015.
\bibitem{Yan}
L. Yan. Asymptotic error for the Milstein scheme for SDEs driven by continuous semimartingales. Ann. Appl. Probab., 15(4):2706--2738, 2005.
\bibitem{zahle1}
M. Z\"ahle. Integration with respect to fractal functions and stochastic calculus. I. Probab. Theory Related Fields, 111(3):333–374, 1998.
\bibitem{zahle3}
M. Z\"ahle. On the link between fractional and stochastic calculus. In Stochastic dynamics (Bremen, 1997), pages 305–325. Springer, New York, 1999.
\bibitem{zahle2}
M. Z\"ahle. Integration with respect to fractal functions and stochastic calculus. II. Math. Nachr., 225:145–183, 2001.

\end{thebibliography}
\end{document}